\numberwithin{equation}{section}
\newtheorem*{theom}{Theorem}
 \newtheorem{thm}{Theorem}[section]
 \newtheorem{lemma}[thm]{Lemma}
 \newtheorem{prop}[thm]{Proposition}
 \newtheorem{cor}[thm]{Corollary}
 \theoremstyle{definition}
\newtheorem*{example}{Example}
\newtheorem{exa}{Example}
\newtheorem{problem}{Problem}
\theoremstyle{remark}
\newtheorem{remark}[thm]{Remark}
\newenvironment{acknowledgment}[1][Acknowledgment]
 {\begin{trivlist} \item[\hskip \labelsep {\bfseries
#1}]}{\end{trivlist}}
\newcommand{ \Mf}{\mathfrak{M}}
\newcommand{\e}{\mathfrak{e}}
\newcommand{\Lc}{\mathcal{L}}
\newcommand{ \Bc}{\mathcal{B}}
\newcommand{\Sc}{\mathcal{S}}
\newcommand{\Gb}{\mathbb G}
\newcommand{ \Rb}{\mathbb{R}}
\newcommand{\Rf}{\mathfrak{R}}
\newcommand{\Sf}{\mathfrak{S}}
\newcommand{\Kc}{\mathcal{K}}
 \newcommand{ \Cbb}{\mathbb{C}}
 \newcommand{\al}{\alpha}
 \newcommand{\la}{\langle}
 \newcommand{\ra}{\rangle}
 \newcommand{\hot}{\hat{\otimes}}
\begin{document}

 \title[invariance subspace property and amenability]{Finite dimensional invariant subspace property and amenability for a class of Banach algebras}

\author[A. T.-M. Lau]{Anthony To-Ming Lau}
\address{Department of Mathematical and Statistical sciences\\
           University of Alberta\\
           Edmonton, Alberta\\
           T6G 2G1 Canada}
\email{tlau@math.ualberta.ca}
\thanks{Supported by NSERC Grant MS100}

\author[Y. Zhang]{ Yong Zhang}
\address{Department of Mathematics\\
           University of Manitoba\\
           Winnipeg, Manitoba\\
           R3T 2N2 Canada}
\email{zhangy@cc.umanitoba.ca}
\thanks{Supported by NSERC Grant 238949}

\date{March 3, 2014}

\subjclass[2010]{Primary 46H20, 43A20, 43A10; Secondary 46H25, 16E40}

\keywords{fixed point property, invariant mean, finite invariant subspace, F-algebra, quantum group, Hopf von Neumann algebra, ideal, module morphism}

\begin{abstract}
Motivated by a result of Ky Fan in 1965, we establish a characterization of a left amenable F-algebra (which includes the group algebra and the Fourier algebra of a locally compact group and quantum group algebras, or more generally the predual algebra of a Hopf von Neumann algebra) in terms of a finite dimensional invariant subspace property. This is done by first revealing a fixed point property for the semigroup of norm one positive linear functionals in the algebra. Our result answers an open question posted in Tokyo in 1993 by the first author (see \cite[Problem 5]{Lau_Tokyo}). We also show that the left amenability of an ideal in an F-algebra may determine the left amenability of the algebra.
\end{abstract}

\maketitle

\section{introduction}

In \cite{K3} (see also \cite{K1,K2,Lau_invar, L-P-W,Lau-Wong}), Ky Fan established the following remarkable ``Invariant Subspace Theorem" for left amenable semigroups:
\begin{theom}
Let $S$ be a left amenable semigroup, and let $\Sc = \{T_s: s\in S\}$ be a representation of $S$ as continuous linear operators on a separated locally convex space $E$. Then the following property holds:
\begin{description}
\item[(KF)]\label{K} If $X$ is a subset of $E$ (containing an $n$-dimensional subspace) such that $T_s(L)$ is an $n$-dimensional subspace contained in $X$ whenever $L$ is one and $s\in S$, and there exists a closed $\Sc$-invariant subspace $H$ in $E$ of codimension $n$ with the property that $(x+H)\cap X$ is compact convex for each $x\in E$, then there exists an $n$-dimensional subspace $L_0$ contained in $X$ such that $T_s(L_0) = L_0$ for all $s\in S$.
\end{description}
\end{theom}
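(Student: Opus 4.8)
The plan is to reduce property (KF) to the classical fixed point theorem for left amenable semigroups, by realising the $n$-dimensional subspaces contained in $X$ as sections of the quotient map $q\colon E\to V:=E/H$. Since $H$ is closed of codimension $n$, the space $V$ is an $n$-dimensional (hence Hausdorff) topological vector space; since $H$ is $\Sc$-invariant, each $T_s$ factors to a continuous linear map $\bar T_s\colon V\to V$ with $q\circ T_s=\bar T_s\circ q$ and $\bar T_{st}=\bar T_s\bar T_t$. For $c\in V$ write $K_c:=q^{-1}(c)\cap X=(x+H)\cap X$ for any $x$ with $q(x)=c$; by hypothesis each $K_c$ is compact and convex, and $0\in K_0$ because $X$ contains an $n$-dimensional subspace.

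The first key point is that \emph{every} $n$-dimensional subspace $L\subseteq X$ is transversal to $H$, i.e.\ $L\cap H=\{0\}$. Indeed, if $0\ne v\in L\cap H$ then $\Rb v\subseteq L\cap H\subseteq X\cap H=K_0$, but $K_0$ is compact while the line $\Rb v$ is a closed, non-compact subset of $E$ --- a contradiction. Hence for such an $L$ the restriction $q|_L\colon L\to V$ is a linear isomorphism, so $L=\sigma_L(V)$ for a unique linear section $\sigma_L$ of $q$, and $\sigma_L(c)\in K_c$ for every $c\in V$. Conversely, any linear section $\sigma$ of $q$ with $\sigma(c)\in K_c$ for all $c$ has image $\sigma(V)$ an $n$-dimensional subspace contained in $X$. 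Thus $L\mapsto\sigma_L$ is a bijection from the set of $n$-dimensional subspaces contained in $X$ onto
\[
\mathcal{C}:=\bigl\{\sigma\colon V\to E \text{ a linear section of } q \ :\ \sigma(c)\in K_c \text{ for all } c\in V\bigr\}.
\]

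Next I would topologise $\mathcal{C}$ by pointwise convergence and verify: $\mathcal{C}$ is convex (a convex combination of sections is a section and each $K_c$ is convex); $\mathcal{C}$ is nonempty (since $X$ contains an $n$-dimensional subspace); and $\mathcal{C}$ is compact, since after fixing a basis $e_1,\dots,e_n$ of $V$ the evaluation map $\sigma\mapsto(\sigma(e_1),\dots,\sigma(e_n))$ identifies $\mathcal{C}$ with a closed subset of the compact product $K_{e_1}\times\cdots\times K_{e_n}$, the conditions $\sum_i a_i\sigma(e_i)\in K_{\sum_i a_i e_i}$ (with $(a_i)$ ranging over scalars) being closed. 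Using the hypothesis that $T_s$ carries $n$-dimensional subspaces of $X$ to $n$-dimensional subspaces of $X$, one checks that each $\bar T_s$ is invertible on $V$ and that $(T_s\cdot\sigma)(c):=T_s\bigl(\sigma(\bar T_s^{-1}c)\bigr)$ defines a continuous affine self-map of $\mathcal{C}$ satisfying $T_s\cdot\sigma_L=\sigma_{T_s(L)}$ and $T_{st}\cdot=(T_s\cdot)\circ(T_t\cdot)$. Thus $\{T_s\cdot\,:\,s\in S\}$ is an affine action of $S$ on the nonempty compact convex set $\mathcal{C}$ by continuous maps.

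Since $S$ is left amenable, the fixed point theorem for left amenable semigroups (one may also argue directly by averaging the orbit of any $\sigma_1\in\mathcal{C}$ against a left-invariant mean on $\ell^\infty(S)$, using compactness of $\mathcal{C}$ to land back in $\mathcal{C}$ and left-invariance to be fixed) furnishes a common fixed point $\sigma_0\in\mathcal{C}$. Then $L_0:=\sigma_0(V)$ is an $n$-dimensional subspace contained in $X$, and $\sigma_{T_s(L_0)}=T_s\cdot\sigma_0=\sigma_0=\sigma_{L_0}$ forces $T_s(L_0)=L_0$ for all $s\in S$; this is precisely (KF). I expect the main difficulty to lie in the bookkeeping of this reduction --- in particular establishing that $n$-dimensional subspaces of $X$ must be transversal to $H$, that $\mathcal{C}$ is compact in the pointwise topology, and that the induced maps $T_s\cdot$ are well-defined affine self-maps of $\mathcal{C}$ --- after which the appeal to amenability is routine.
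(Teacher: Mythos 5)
Your argument is correct, and it is worth noting that the paper itself does not prove this statement: it is Ky Fan's 1965 theorem, quoted from \cite{K3} as motivation. The closest the paper comes is its proof of Theorem~\ref{KF thm} (property ($F_n$) for $S=P_1(A)$), and your reduction is essentially the same as the one used there: pass to the $n$-dimensional quotient $E/H$, observe that compactness of $H\cap X$ forces every $n$-dimensional $L\subseteq X$ to be transversal to $H$, and parametrize $\Lc_n(X)$ by a compact set on which $S$ acts --- the paper uses the set $\Kc$ of distinguished bases $\vec x_L$ with $q(\vec x_L)=\vec y$, you use the set $\mathcal C$ of linear sections of $q$, and these are homeomorphic via $\sigma\mapsto(\sigma(e_1),\dots,\sigma(e_n))$, with the same induced action ($T_s\circ\sigma\circ\bar T_s^{-1}$ versus $T_s\circ q_L^{-1}\circ\tilde T_s^{-1}$). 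The genuine divergence is in the final fixed-point step. In ($F_n$) the sets $(x+H)\cap X$ are only assumed compact, so the paper must route through Theorem~\ref{ELA}, i.e.\ extreme left amenability of $P_1(A)$ and the fixed point property ($F_E$) for jointly continuous actions on compact Hausdorff spaces; in Ky Fan's setting the sets $(x+H)\cap X$ are compact \emph{and convex} and $S$ is a discrete left amenable semigroup, and you exploit exactly this extra convexity: $\mathcal C$ becomes compact convex, the action $(T_s\cdot\sigma)(c)=T_s(\sigma(\bar T_s^{-1}c))$ is affine and continuous, and Day's fixed point theorem (equivalently, averaging an orbit against a left invariant mean on $\ell^\infty(S)$) finishes the proof. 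This is the right trade-off for the classical statement, and all the verifications you flag --- transversality via the non-compactness of a line sitting inside the compact $K_0$, closedness of $\mathcal C$ inside $\prod_i K_{e_i}$, invertibility of $\bar T_s$ from $q(T_s(L))=E/H$, and the identity $T_s\cdot\sigma_L=\sigma_{T_s(L)}$ --- go through as you describe.
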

The origin of Ky Fan's Theorem lies in the earlier investigations of Pontrjagin, Iovihdov, Krein and Naimark, concerning invariant subspaces for Lorentz transformations on a Hilbert space \cite{Iohv,I-K, Krein, Naimark1, Naimark2, Pontr}. In Physics, a Lorentz transformation is an invertible linear mapping on $\Rb^4$ that describes how a measurement of space and time observed in a frame of reference is converted into another frame of reference. From special relativity Lorentz transformations may be characterized as invertible linear mappings that preserve the quadratic form
\[ 
J(\vec x) = x^2 + y^2 + z^2 - c^2 t^2, \quad \vec x = (x, y, z, t)\in \Rb^4
\]
where the constant $c$ is the speed of light.
Quantity $J$ represents the space time interval. 
 It is a well known fact that for any Lorentz transformation $T$ there is a three dimensional subspace $V$ of $\Rb^4$ which is $T$-invariant and  positive (in the sense that $T(\vec x) = \vec x$ and $J(\vec x)\geq 0$ for all $\vec x \in V$). 
 
L. S. Pontrjagin \cite{Pontr}, I. S. Iovihdov \cite{Iohv}, M. G. Krein \cite{Krein, I-K} and M. A. Naimark \cite{Naimark1,Naimark2} investigated infinite-dimension versions of the above invariant subspace property, and Naimark finally established the following theorem in 1963 \cite{Naimark1}.
\begin{theom}
Let $n>0$ be an integer. Consider the quadratic form on $\ell^2$ given by
\[
J_n(x) = \sum_{i=1}^n|x_i|^2 - \sum_{i=n+1}^\infty |x_i|^2, \quad x=(x_1,x_2,\cdots)\in \ell^2.
\]
Suppose that $G$ is a commutative group of continuous,  invertible, $J_n$-preserving, linear transformations on $\ell^2$. Then there is a $G$-invariant $n$-dimensional subspace $V$ of $\ell^2$ which is positive (in the sense that $J_n(x) \geq 0$ for all $x\in V$).
\end{theom}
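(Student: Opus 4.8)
\emph{Proof proposal.} The plan is to recast the statement in the language of Krein spaces, reduce it to a common fixed point problem for $G$ acting on the operator ball that parametrises the maximal non-negative subspaces, settle the one-operator case with the Schauder--Tychonoff theorem, and then run an induction on $n$.

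Give $\ell^2$ the indefinite inner product $[x,y] = \sum_{i\le n} x_i\bar y_i - \sum_{i>n} x_i\bar y_i$, so that $J_n(x) = [x,x]$, and split $\ell^2 = \Hh_+\oplus\Hh_-$ with $\Hh_+ = \mathrm{span}\{e_1,\dots,e_n\}$ and $\Hh_-$ its orthogonal complement. Call a closed subspace \emph{non-negative} if $[\cdot,\cdot]\ge0$ on it; since the orthogonal projection onto $\Hh_+$ is injective on such a subspace, its dimension is at most $n$, and the $n$-dimensional ones --- the \emph{maximal} non-negative subspaces --- are precisely the graphs $V_K = \{v+Kv:v\in\Hh_+\}$ of linear contractions $K\colon\Hh_+\to\Hh_-$. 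Any invertible $J_n$-preserving $T$, written in block form relative to $\ell^2 = \Hh_+\oplus\Hh_-$ with blocks $A,B,C,D$ ($A$ the $\Hh_+\to\Hh_+$ corner, $B$ the $\Hh_-\to\Hh_+$ corner, and so on), maps maximal non-negative subspaces to maximal non-negative subspaces, with $A+BK$ invertible on $\Hh_+$ and $TV_K = V_{\phi_T(K)}$, where $\phi_T(K) = (C+DK)(A+BK)^{-1}$. So the theorem is equivalent to: the commuting family $\{\phi_T:T\in G\}$ has a common fixed point in the operator ball $\mathfrak B = \{K\in B(\Hh_+,\Hh_-):\|K\|\le1\}$. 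The finite-dimensionality of $\Hh_+$ enters twice: $B(\Hh_+,\Hh_-)$ is (isomorphic to) a Hilbert space, so $\mathfrak B$ is weakly compact and convex; and the blocks $A,B$ have finite rank, so $K\mapsto A+BK$ and hence $K\mapsto(A+BK)^{-1}$ are weak-to-norm continuous into the finite-dimensional algebra $B(\Hh_+)$, from which one reads off that each $\phi_T\colon\mathfrak B\to\mathfrak B$ is weakly continuous. For a single $T$, Schauder--Tychonoff now supplies a $T$-invariant maximal non-negative subspace --- this is Pontrjagin's theorem.

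For the whole family I would induct on $n$, proving the statement for every separable Krein space with $n$ positive squares and infinite negative index (all isomorphic to $(\ell^2,[\cdot,\cdot])$, so that the inductive hypothesis may be applied to subspaces and quotients of this type); the case $n=0$ is trivial. In the inductive step, Zorn's lemma applied to the $G$-invariant closed non-negative subspaces (norm-limits preserve non-negativity, so chains have upper bounds) yields a maximal one $W$, of dimension $m\le n$; if $m=n$ we are done. If $0<m<n$ and $W$ is non-degenerate, then $W$ is uniformly positive, $\ell^2 = W\oplus W^{[\perp]}$ is a $[\cdot,\cdot]$-orthogonal decomposition, and $W^{[\perp]}$ is a Krein space with $n-m<n$ positive squares on which $G$ acts; the inductive hypothesis provides a nonzero $G$-invariant non-negative subspace of $W^{[\perp]}$, enlarging $W$ against maximality. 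If $W$ is degenerate, its isotropic part $W^\circ = W\cap W^{[\perp]}$ is a nonzero finite-dimensional $G$-invariant subspace; because $G$ is commutative the operators $T|_{W^\circ}$ have a common eigenvector $v$, which is necessarily isotropic, and then $v^{[\perp]}$ is a $G$-invariant closed subspace of codimension one with radical $\Cbb v$, so $v^{[\perp]}/\Cbb v$ is a separable Krein space with $n-1$ positive squares; the inductive hypothesis there, pulled back, gives the desired $n$-dimensional subspace. The two cases this leaves are, I expect, the real difficulty. One is the borderline possibility that $\{0\}$ is the \emph{only} $G$-invariant closed non-negative subspace, which has to be excluded by feeding the one-operator Schauder fixed point back into the commuting structure --- for instance by analysing the weakly compact, $G$-invariant fixed point set of a single $\phi_{T_0}$ and its behaviour at the boundary of $\mathfrak B$. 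The other, on the technical side, is verifying that $v^{[\perp]}/\Cbb v$ really is a Pontrjagin space of the predicted index and topology, which calls on the structure theory of indefinite-metric spaces. (A more geometric route to the family case equips the interior of $\mathfrak B$ with its biholomorphically invariant metric, observes that every $\phi_T$ acts as an isometry, and produces the common fixed point as the circumcentre of a bounded orbit --- the unbounded-orbit case being exactly the one that forces a common invariant isotropic line; but disentangling that dichotomy is itself the heart of the theorem, and I expect it to be the hard part however one proceeds.)
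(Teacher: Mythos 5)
First, a point of comparison: the paper does not prove this statement at all --- it is quoted from Naimark \cite{Naimark1} purely as historical motivation for Ky Fan's theorem and for Theorem~\ref{KF thm} --- so there is no in-paper argument to measure yours against, and your proposal must stand on its own. Judged that way, your reduction is the standard and correct one: identifying the $n$-dimensional non-negative subspaces with the closed ball $\mathfrak{B}$ of contractions $K\colon \Hh_+\to\Hh_-$, the induced fractional-linear action $\phi_T(K)=(C+DK)(A+BK)^{-1}$, the weak compactness and convexity of $\mathfrak{B}$, the weak continuity of each $\phi_T$ (using that the blocks $A$ and $B$ have finite rank), and the Schauder--Tychonoff fixed point for a single $T$ are all sound; this much is Pontrjagin's theorem.

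The gap is exactly where you say it is, and it is not a borderline technicality but the entire content of Naimark's theorem. Your induction on $n$ needs, as its engine, a \emph{nonzero} $G$-invariant closed non-negative subspace: Zorn's lemma happily returns $W=\{0\}$ as the maximal element if no nonzero invariant non-negative subspace exists, and then none of your three cases fires. The single-operator fixed point does not feed into the commuting structure for free: $\phi_T$ is fractional-linear, not affine, so the fixed point set of $\phi_{T_0}$ in $\mathfrak{B}$ need not be convex, and the Markov--Kakutani scheme of intersecting nested compact convex fixed point sets is unavailable. One genuinely has to prove that the compact, $G$-invariant fixed point set of a single $\phi_{T_0}$ contains a common fixed point, and this is where Naimark's and Krein's arguments expend all their effort (via the biholomorphically invariant metric on the interior of $\mathfrak{B}$ together with the bounded-orbit/invariant-isotropic-line dichotomy, or via the spectral theory of commutative operator algebras in Pontrjagin spaces). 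Until one of the two closing strategies you name in parentheses is actually carried out, the proposal is a correct reduction plus a correct treatment of the one-operator case, not a proof of the theorem.
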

 
To understand the conditions on the subset $X$ in Ky Fan's property (KF) we note that $X=\{x\in \ell^2: J_n(x)\geq0\}$ in the setting of the above theorem satisfies indeed the requirement in (KF). However, there is no mention of an invariant subspace $H$ in Naimark's result. Ky Fan's Theorem removes the commutative group and Hilbert space restrictions of Naimark's Theorem, replacing them by amenability and the conditions involving $H$.


The purpose of this paper is to establish a finite dimensional invariant subspace property similar to (KF) for the class of F-algebras. The class of F-algebras includes many classical Banach algebras related to a locally compact group or a hypergroup. It also includes the predual algebras of Hopf von Neumann algebras, in particular the class of quantum group algebras $L^1(\Gb)$. To achieve our goal we will first study the semigroup of all norm one normal positive functionals on the dual von Neumann algebra $A^*$ with the norm topology carried from $A$. We will show that the left amenability of an F-algebra $A$ is equivalent to 
the extreme left amenability of this topological semigroup. This equivalence plays an important role in our investigation about the Ky Fan's property for F-algebras.

The paper is organized as follows: In Section~\ref{sec 2} we discuss some basic properties of left amenability for an F-algebra that we shall need in establishing our main results. In Section~\ref{sec 3} we show (Theorem~\ref{ELA}) that an F-algebra $A$ is left amenable if and only if the semigroup $S$ of normal positive functionals of norm $1$ on $A^*$ has a fixed point property on a compact Hausdorff space. The latter property is equivalent to $S$ being extremely left amenable. In Section~4 we shall prove our main result (Theorem~\ref{KF thm}) about the finite dimensional subspace property of the F-algebra $A$ when $A$ is left amenable. Special cases regarding Banach algebras associated to a locally compact group or hypergroup and the quantum group algebra of a locally compact quantum group are also addressed there. In Section~\ref{sec 5} we investigate further the relation between the left amenability of an F-algebra and that of its closed left ideals. Applications to the group algebra $L^1(G)$, the measure algebra $M(G)$ and the Fourier Stieltjes algebra $B(G)$ are given. In Section~\ref{sec 6} we shall present some related results and discuss briefly the operator amenability for F-algebras. We will also address some open problems in this section.

Finite dimensional invariant subspace property for F-algebras was studied in \cite{L-W 88} with the additional assumption of ``inversely equicontinuity'' on the actions. The finite dimensional invariant subspace property considered in our main theorem (Theorem~\ref{KF thm}) in this paper removes this redundant assumption. It is the natural analogue of Ky Fan's finite dimensional subspace property (KF). We also answer an open problem posted by the first author in 1993 during a conference on Nonlinear and Convex Analysis held at Keio University, Tokyo, (see \cite[Problem 5]{Lau_Tokyo}) regarding quantum group algebras $L^1(\Gb)$.

\section{preliminaries}\label{sec 2}

For a locally convex space $E$, the dual space of $E$ is denoted by $E^*$. The action of $f\in E^*$ at $x\in E$ is denoted either by $f(x)$ or by $\la x,f\ra$. Let $A$ be a Banach algebra and let $X$ be a Banach left, right or two-sided $A$-module. Then $X^*$ is respectively a Banach right, left or two-sided $A$-module with the corresponding module action(s) defined naturally by 
\[
\la x, f\cdot a\ra = \la a\cdot x, f\ra, \la x, a\cdot f\ra = \la x\cdot a, f\ra \quad (a\in A, f\in X^*, x\in X).
\]
Let $X$ be a Banach $A$-bimodule. A linear mapping $D$: $A\to X$ is called a \emph{derivation} if it satisfies
\[
 D(ab) = a\cdot D(b) + D(a)\cdot b \quad (a,b\in X).
 \]
 Derivations in the form $D(a) = a\cdot x_0 - x_0\cdot a$ ($a\in A$) for some fixed $x_0\in X$ are called inner definitions.

A Banach algebra $A$ is an \emph{F-algebra} \cite{Lau_F} (also known as Lau algebras \cite{Pier}) if it is the (unique) predual of a $W^*$-algebra $ \Mf$ and the identity $\e$ of $ \Mf$ is a multiplicative linear functional on $A$.  Since $A^{**} =  \Mf^*$, we denote by $P_1(A^{**})$, the set of all normalized positive linear functionals on $ \Mf$, that is 
\[
P_1(A^{**}) =\{m\in A^{**}: m\geq 0, m(\e) =1\}.
\]
In this case $P_1(A^{**})$ is a semigroup with the first (or second) Arens multiplication).

 Examples of F-algebras include the predual algebras of a Hopf von Neumann algebra (in particular, quantum group algebras), the group algebra $L^1(G)$ of a locally compact group $G$, the Fourier algebra $A(G)$ and the Fourier-Stieltjes algebra $B(G)$ of a topological group $G$ (see \cite{D-L-S, Lau_F, Lau-Ludwig}). They also include the measure algebra $M(S)$ of a locally compact semigroup $S$. Moreover, the hypergroup algebra $L^1(H)$ and the measure algebra $M(H)$ of a locally compact hypergroup $H$ with a left Haar measure are F-algebras. In this case, it was shown in \cite[Theorem~5.2.2]{Willson} (see also \cite[Remark~5.3]{Willson2}) that $(L^1(H))^*=L^\infty(H)$ is not a Hopf von Neumann algebra unless $H$ is a locally compact group.

We recall that a \emph{semitopological semigroup} is a semigroup $S$ with a Hausdorff topology such that, for each $a\in S$, the mappings $s\mapsto as$ and $s\mapsto sa$ from $S$ to $S$ are continuous. $S$ is called a \emph{topological semigroup} if the mapping $(s,t)\mapsto st$: $S\times S \to S$ is continuous when $S\times S$ is equipped with the product topology.

Let $G$ be a locally compact group with a fixed left Haar measure $\lambda$. Then the group algebra $L^1(G)$ is the Banach space of $\lambda$-integrable functions with product
\[
f*g(x) = \int_G{f(y)g(y^{-1}x)}d\lambda(y) \quad (x\in G).
\]
If $S$ is a locally compact semigroup, then the measure algebra $M(S)$ is the space of regular Borel measures on $S$ with the total variation norm and the convolution product (see \cite{HR}) defined by 
\[
\la\mu*\nu, f\ra = \iint_{G\times G}{f(xy)}d\nu(y)d\mu(x) \quad (f\in C_0(S)),
\]
where $C_0(S)$ denotes the space of continuous functions vanishing at $\infty$.
If $S$ is discrete, then $M(S)=\ell^1(S)$.

In his seminal paper \cite{Eymard} P. Eymard has associated to a locally compact group $G$ two important commutative Banach algebras. These are the Fourier algebra $A(G)$ and the Fourier-Stieltjes algebra $B(G)$. The latter is indeed the linear span of the set of all continuous positive definite complex-valued functions on $G$. This is also the space of the coefficient functions of the unitary representations of the group $G$. More precisely, given $u\in B(G)$ there exists a unitary representation $\pi$ of $G$ and two vectors $\xi$ and $\eta$ in the representation Hilbert space $H(\pi)$ of $\pi$ such that 
\[
u(x) = \la\pi(x)\xi, \eta\ra \quad (x\in G).
\]
Equipped with the norm $\|u\| = \inf_{\xi,\eta}\|\xi\|\,\|\eta\|$ and the pointwise multiplication $B(G)$ is a commutative Banach algebra, where the infimum is taking on all $\xi$ and $\eta$ satisfying the preceding equality. As a Banach algebra $B(G)$ is also the dual space of the group C*-algebra $C^*(G)$. The Fourier algebra $A(G)$ is the closed ideal of $B(G)$ generated by the elements of $B(G)$ with compact supports. The algebra $A(G)$ can also be defined as the set of coordinate functions of the left regular representations of $G$ on $L^2(G)$. When $G$ is abelian, via Fourier transform we have
\[
A(G) = L^1(\hat G), \quad B(G) = M(\hat G) \quad \text{and } C^*(G) = C_0(\hat G),
\]
where $\hat G$ is the dual group of $G$.

Let $S$ be a semitopological semigroup. Let $C_b(S)$ be the commutative C*-algebra of all bounded continuous complex-valued functions on $S$ with the sup norm topology and the pointwise product. Consider its C*-subalgebra $LUC(S)$ consisting of all left uniformly continuous functions on $S$ (i.e. all
$f\in C_b(S)$ such that the mapping $s\mapsto \ell_sf$ from $S$ into $C_b(S)$ is continuous when $C_b(S)$ has the norm topology, where $(\ell_sf)(t) = f(st)$ for $s,t\in S$). Evidently, $LUC(S)$ 
is invariant under translations and contains the constant function. The semigroup $S$ is called \emph{left amenable} if $LUC(S)$ has a left invariant mean, that is there is $m\in LUC(S)^*$ such that 
\[
\|m\| = m(1) =1,\; m(\ell_sf) = m(f) \quad (s\in S, f\in LUC(S)).
\]
We call $S$  \emph{extremely left amenable} if there is a left invariant mean $m$ which is multiplicative, that is it satisfies further
 \[
 m(fg) = m(f)m(g)\quad (f,g \in LUC(S)).
 \]

Let $A$ be an F-algebra. Elements of $P_1(A^{**})$ are called \emph{means} on $A^* =\Mf$. It is well-known that 
\[
P_1(A) = P_1(A^{**})\cap A
\]
  is a topological semigroup with the product and topology carried from $A$ (this may be regarded as a consequence of \cite[Proposition~1.5.2]{Sakai}), and $P_1(A)$ spans $A$ (see \cite[Theorem~1.14.3]{Sakai}). A mean $m \in P_1(A^{**})$ on $A^*$ is called a \emph{topological left invariant mean}, abbreviated as TLIM,  if $a\cdot m = m$ for all $a\in P_1(A)$, in other words $m \in P_1(A^{**})$ is a TLIM if $m(x\cdot a) = m(x)$ for all $a\in P_1(A)$ and $x\in A^*$. 

 An F-algebra $A$ is called \emph{left amenable} if, for each Banach $A$-bimodule $X$ with the left module action specified by $a\cdot x = \langle a, \e\rangle x$ ($a\in A$, $x\in X$), every continuous derivation from $A$ into $X^*$ is inner. The following was shown in \cite{Lau_F} (see Theorems~4.1 and 4.6 there).

\begin{lemma}\label{left amen1-3}
Let $A$ be an F-algebra. Then the following are equivalent.
\begin{enumerate}
\item There is a TLIM for $A^*$. \label{TLIM}
\item  
The algebra $A$ is left amenable.\label{inner}
\item There exists a net $(m_\al)\subset P_1(A)$ such that $am_\al - m_\al \to 0$
in norm topology for each $a\in P_1(A)$.\label{appTLIM}
\end{enumerate}
\end{lemma}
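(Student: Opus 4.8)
The plan is to prove $(\ref{TLIM})\Leftrightarrow(\ref{appTLIM})$, then $(\ref{appTLIM})\Rightarrow(\ref{inner})$, and finally $(\ref{inner})\Rightarrow(\ref{TLIM})$, which reproves \cite[Theorems~4.1 and~4.6]{Lau_F}. The equivalence $(\ref{TLIM})\Leftrightarrow(\ref{appTLIM})$ is a soft passage between a weak$^*$ existence statement and its norm-approximate version; $(\ref{appTLIM})\Rightarrow(\ref{inner})$ is an averaging argument for derivations; and $(\ref{inner})\Rightarrow(\ref{TLIM})$ is the cohomological heart, where one makes a concrete derivation inner. The step I expect to be the main obstacle is the last one, specifically the extraction of a \emph{positive} invariant functional.

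For $(\ref{appTLIM})\Rightarrow(\ref{TLIM})$: if $(m_\al)\subset P_1(A)$ satisfies $am_\al-m_\al\to0$ in norm for each $a\in P_1(A)$, let $m$ be a weak$^*$ cluster point of $(m_\al)$ in $A^{**}$; since $P_1(A^{**})$ is weak$^*$ closed, $m\in P_1(A^{**})$, and for $x\in A^*$, $a\in P_1(A)$ one has $\la m_\al,x\cdot a\ra-\la m_\al,x\ra=\la am_\al-m_\al,x\ra\to0$, so $m(x\cdot a)=m(x)$ and $m$ is a TLIM. For $(\ref{TLIM})\Rightarrow(\ref{appTLIM})$: let $m$ be a TLIM and use that $P_1(A)$ is weak$^*$ dense in $P_1(A^{**})$ (Goldstine, together with the quoted fact that $P_1(A)$ spans $A$ — split an approximating net into positive and negative parts) to pick $(n_\al)\subset P_1(A)$ with $n_\al\to m$ weak$^*$; then $an_\al-n_\al\to0$ weakly in $A$ for each $a\in P_1(A)$. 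For a finite $F=\{a_1,\dots,a_k\}\subset P_1(A)$, the convex set $\{(a_1n-n,\dots,a_kn-n):n\in P_1(A)\}$ has $0$ in its weak closure in $A^k$, hence (Mazur) in its norm closure; as $P_1(A)$ is convex, for each $\ep>0$ there is $n\in P_1(A)$ with $\max_i\|a_in-n\|<\ep$, and directing the pairs $(F,\ep)$ in the usual way produces the required net.

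For $(\ref{appTLIM})\Rightarrow(\ref{inner})$: let $X$ be a Banach $A$-bimodule with left action $a\cdot x=\la a,\e\ra x$ and let $D\colon A\to X^*$ be a continuous derivation; then the right action on $X^*$ is $\phi\cdot a=\la a,\e\ra\phi$ and the left action $\phi\mapsto a\cdot\phi$ on $X^*$ is weak$^*$ continuous. Taking $(m_\al)\subset P_1(A)$ as in $(\ref{appTLIM})$ and using $\la m_\al,\e\ra=1$, the derivation identity gives $D(am_\al)=a\cdot D(m_\al)+D(a)$, so $D(a)=D(am_\al)-a\cdot D(m_\al)$ for $a\in P_1(A)$. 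The net $(D(m_\al))$ is bounded; let $\phi_0$ be a weak$^*$ cluster point. Since $\|am_\al-m_\al\|\to0$, along the corresponding subnet $D(am_\al)\to\phi_0$ and $a\cdot D(m_\al)\to a\cdot\phi_0$ weak$^*$, whence $D(a)=\phi_0-a\cdot\phi_0=-\bigl(a\cdot\phi_0-\la a,\e\ra\phi_0\bigr)$ for $a\in P_1(A)$. Both sides are linear in $a$ and $P_1(A)$ spans $A$, so $D(a)=a\cdot(-\phi_0)-(-\phi_0)\cdot a$ for all $a\in A$; thus $D$ is inner and $A$ is left amenable.

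For $(\ref{inner})\Rightarrow(\ref{TLIM})$: fix $m_1\in A^{**}$ with $\la m_1,\e\ra=1$. Because $\e$ is a character, $\Cbb\e$ is a sub-bimodule of $A^*$ (with left action $a\cdot x=\la a,\e\ra x$ and the natural right action $\la b,x\cdot a\ra=\la ab,x\ra$), so $X:=A^*/\Cbb\e$ is a Banach $A$-bimodule of the required type with $X^*=\e^{\perp}\subset A^{**}$; moreover the ``obvious'' implementing element $m_1$ does \emph{not} lie in $\e^{\perp}$, so innerness here is a genuine condition. Again because $\e$ is a character, $D(a):=a\cdot m_1-\la a,\e\ra m_1$ (natural left action of $A$ on $A^{**}$) lands in $\e^{\perp}$ and is a continuous derivation $A\to X^*$. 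By hypothesis $D$ is inner, $D(a)=a\cdot m_0-\la a,\e\ra m_0$ for some $m_0\in\e^{\perp}$, so $m:=m_1-m_0$ satisfies $\la m,\e\ra=1$ and $a\cdot m=\la a,\e\ra m$ for all $a\in A$, i.e.\ $m(x\cdot a)=m(x)$ for all $x\in A^*$, $a\in P_1(A)$. It remains to replace this left-invariant $m$ by a \emph{positive} functional, i.e.\ to land in $P_1(A^{**})$ (equivalently, in $P_1(A)$ after the approximation of the previous paragraph); this is the delicate point, since a left-invariant functional with $\la\cdot,\e\ra=1$ need not be positive. One passes, via Goldstine and the Mazur step above, to a bounded net in $A$ that is approximately left invariant, replaces its members by their absolute values — here the order structure of $A$ inherited from the underlying Hopf--von Neumann or Banach-lattice structure of the concrete F-algebras is used — and renormalizes, which simultaneously re-establishes $(\ref{appTLIM})$. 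Carrying out this normalization is exactly the content of \cite[Theorems~4.1 and~4.6]{Lau_F}; it simplifies considerably when $A$ has a bounded approximate identity.
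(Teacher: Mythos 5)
The paper gives no proof of this lemma --- it is quoted verbatim from \cite[Theorems~4.1 and 4.6]{Lau_F} --- so your attempt can only be compared with the standard argument. Most of your cycle is that standard argument and is correct: $(3)\Rightarrow(1)$ by taking a weak* cluster point in the weak*-compact set $P_1(A^{**})$; $(1)\Rightarrow(3)$ by the Day--Namioka scheme (weak* density of the normal states $P_1(A)$ in the state space of $\Mf$, then Mazur applied to the convex set of tuples $(a_1n-n,\dots,a_kn-n)$); $(3)\Rightarrow(2)$ by averaging, i.e.\ writing $D(a)=D(am_\al)-a\cdot D(m_\al)$ and passing to a weak* cluster point of the bounded net $(D(m_\al))$; and the first half of $(2)\Rightarrow(1)$, producing from the inner derivation $a\mapsto a\cdot m_1-\la a,\e\ra m_1$ into $(A^*/\Cbb\e)^*=\e^\perp$ a functional $m\in A^{**}$ with $\la m,\e\ra=1$ and $a\cdot m=\la a,\e\ra m$.

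The genuine gap is the positivization of this $m$, which you both defer to the very theorems being proved (circular as a standalone proof) and sketch by a mechanism that fails for general F-algebras. Descending to a bounded approximately invariant net in $A$ and replacing its members by their absolute values works for $L^1(G)$ because $L^1(G)$ is a Banach lattice and one has the estimate $\|\mu*|g|-|g|\|_1\le 2\|\mu*g-g\|_1$ for $\mu\in P_1(L^1(G))$; but the predual of a noncommutative von Neumann algebra is \emph{not} a Banach lattice, the absolute value of a normal functional is not even subadditive (already in the dual of $M_2(\Cbb)$ one can have $|\omega+\tau|\not\le|\omega|+|\tau|$), and no inequality of the form $\|a\,|b|-\la a,\e\ra|b|\|\le C\|ab-\la a,\e\ra b\|$ is available. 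The correct general argument positivizes upstairs in $\Mf^*$: for $a\in P_1(A)$ the map $T_a\colon x\mapsto x\cdot a$ on $\Mf$ is positive, unital (so contractive) and commutes with the involution, because $P_1(A)$ is a subsemigroup of $A$ and positive functionals $\omega$ satisfy $\omega(x^*)=\overline{\omega(x)}$; hence $T_a^*$ fixes the Hermitian part $n$ of your invariant functional, and since $n=T_a^*n_+-T_a^*n_-$ is a decomposition into positive functionals of total norm at most $\|n_+\|+\|n_-\|=\|n\|$, the uniqueness of the Jordan decomposition forces $T_a^*n_\pm=n_\pm$. As $n_+(\e)\ge n(\e)=1>0$, the state $n_+/n_+(\e)$ is a TLIM. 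With this replacement (and noting that your justification of the weak* density of $P_1(A)$ in $P_1(A^{**})$ should invoke the density of normal states in the state space of a von Neumann algebra rather than Goldstine plus a positive/negative splitting), the proof is complete.
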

 
We note here that, being F-algebras, the group algebra $L^1(G)$, the measure algebra $M(G)$ of a locally compact group $G$ are left amenable if and only if $G$ is an amenable group; while the Fourier algebra $A(G)$ and the Fourier-Stieltjes algebra $B(G)$ are always left amenable \cite{Lau_F}. The hypergroup algebra $L^1(H)$ of a locally compact hypergroup $H$ with a left Haar measure is left amenable if and only if $H$ is an amenable hypergroup \cite{Skan}. Also the left amenability of the predual algebra of a Hopf von Neumann algebra, as an F-algebra, coincides with that studied in \cite{Ruan, Voic} (see also \cite{B-T} and references therein).

\begin{remark}
Suppose that $A$ and $B$ are F-algebras associated to W*-algebras $\Mf_1$ and $\Mf_2$ respectively. Let $\e_i$ be the identity of $\Mf_i$ ($i=1,2$). It is readily seen that if there is a Banach algebra homomorphism $P$: $A\to B$ such that $P^*(\e_2) = \e_1$ and $P(A)$ is dense in $B$, then the left amenability of $A$ implies the left amenability of $B$.
\end{remark}

\section{Fixed point property}\label{sec 3}

Let $S$ be a semigroup and let $C$ be a Hausdorff space. We say that $\Sc = \{T_s:\; s\in S\}$ is a \emph{representation} of $S$ on $C$ if for each $s\in S$, $T_s$ is a mapping from $C$ into $C$ and $T_{st}(x) = T_s(T_tx)$ ($s,t\in S$, $x\in C$). Sometimes we simply use $sx$ to denote $T_s(x)$ if there is no ambiguity in the context. Suppose that $S$ is a semitopological semigroup. We say that the representation is separately (resp. jointly) continuous if the mapping $(s,x) \mapsto T_s(x)$ from $S\times C$ into $C$ is separately (resp. jointly) continuous. If $C$ is a locally convex space $E$ with topology generated by a family $Q$ of seminorms, we denote it by $C=(E,Q)$.

A \emph{metric semigroup} is a semitopological semigroup whose topology is a metric $d$. We consider the following fixed point property for a metric semigroup $S$.

\begin{description}
\item[($F_U$)] If $ \Sc = \{T_s: s\in S\}$ is a separately continuous representation of $S$ on a compact subset $K$ of a locally convex space $(E,Q)$ and if the mapping $s\mapsto T_s(y)$ from $S$ into $K$ is uniformly continuous for each $y\in K$, then $K$ has a common fixed point for $S$.\label{FU}
\end{description}

Note that the mapping $s \mapsto T_s(y)$ is \emph{uniformly continuous} if for each $\tau\in Q$ and each $\epsilon >0$ there is $\delta>0$ such that 
\[
\tau(T_s(y) - T_t(y)) \leq \epsilon
\]
 whenever $d(s,t) \leq \delta$. For example, suppose that $S$ is a subset of a locally convex space $L$ that acts on $E$ such that $(a,y)\mapsto ay$: $L\times E \to E$ is separately continuous; if $a\mapsto ay$ is linear in $a\in L$ for each $y\in E$, then the induced action of $S$ on $E$, $(s,y)\mapsto sy$ ($s\in S$), is uniformly continuous in $s$ for each $y\in E$.

Let $A$ be an F-algebra. As we have known, $P_1(A)$ is indeed a metric topological semigroup with the product and topology inherited from $A$.

\begin{lemma}\label{fixed pt}
The F-algebra $A$ is left amenable if and only if the metric semigroup $P_1(A)$ has the fixed point property ($F_U$).
\end{lemma}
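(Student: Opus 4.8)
The plan is to prove both implications through the approximate-TLIM characterization in Lemma \ref{left amen1-3}\eqref{appTLIM}, exploiting the fact that $P_1(A)$ is a metric topological semigroup and that $P_1(A)$ spans $A$.

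For the forward implication, suppose $A$ is left amenable, so by Lemma \ref{left amen1-3} there is a net $(m_\al)\subset P_1(A)$ with $am_\al - m_\al \to 0$ in norm for each $a\in P_1(A)$. Let $\Sc = \{T_s:s\in S\}$ be a separately continuous representation of $S = P_1(A)$ on a compact subset $K$ of a locally convex space $(E,Q)$, with $s\mapsto T_s(y)$ uniformly continuous for each $y\in K$. Fix any $y_0\in K$ and consider the net $T_{m_\al}(y_0)\in K$; by compactness pass to a subnet converging to some $x_0\in K$. The goal is to show $T_a(x_0)=x_0$ for all $a\in P_1(A)$. The key computation is to estimate, for fixed $a\in P_1(A)$ and fixed $\tau\in Q$, the quantity $\tau(T_a(x_0) - x_0)$; one writes $T_a(x_0) - x_0$ as a limit (along the subnet) of $T_a(T_{m_\al}(y_0)) - T_{m_\al}(y_0) = T_{am_\al}(y_0) - T_{m_\al}(y_0)$, using separate continuity of the representation to push $T_a$ through the limit. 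Then the uniform continuity of $s\mapsto T_s(y_0)$, combined with $d(am_\al, m_\al) = \|am_\al - m_\al\| \to 0$, forces $\tau(T_{am_\al}(y_0) - T_{m_\al}(y_0)) \to 0$. Since $\tau$ and $a$ were arbitrary, $x_0$ is the desired common fixed point. One subtlety: the metric $d$ on $P_1(A)$ is the restriction of the norm metric of $A$, so $d(am_\al, m_\al) = \|am_\al - m_\al\|$ exactly, which is what makes the uniform-continuity hypothesis bite.

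For the converse, assume $P_1(A)$ has property ($F_U$); we must produce a TLIM, i.e.\ by Lemma \ref{left amen1-3}\eqref{TLIM} a fixed point in $P_1(A^{**})$ for the action of $P_1(A)$ by left multiplication. Take $E = A^{**}$ with its weak* topology, let $Q$ be the family of seminorms $m\mapsto |\la x,m\ra|$ for $x\in A^*$, and let $K = P_1(A^{**})$, which is weak* compact (it is a weak* closed, bounded, convex subset of the unit ball of $A^{**}$). For $s = a\in P_1(A)$ define $T_a(m) = a\cdot m$, the first Arens product; this is a representation on $K$ because $P_1(A^{**})$ is closed under the Arens multiplication and the identity $\e$ is multiplicative so $(a\cdot m)(\e)=1$. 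Separate continuity of $(a,m)\mapsto a\cdot m$ in the weak* topology of $A^{**}$ is standard for the first Arens product. For the uniform-continuity hypothesis, fix $m\in K$ and the seminorm $\tau_x$; then $|\la x, a\cdot m\ra - \la x, b\cdot m\ra| = |\la (a-b)\cdot x, m\ra| \le \|x\cdot(a-b)\|_{A^*}\cdot\|m\| \le \|x\|\,\|a-b\| = \|x\|\,d(a,b)$, so $a\mapsto T_a(m)$ is even Lipschitz; this is exactly the situation described in the paragraph after property ($F_U$), since left multiplication is linear on $A$. Applying ($F_U$) yields $m_0\in P_1(A^{**})$ with $a\cdot m_0 = m_0$ for all $a\in P_1(A)$, which is a TLIM, hence $A$ is left amenable.

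The main obstacle I expect is bookkeeping around which Arens product to use and verifying the module-action identities and separate weak* continuity carefully; the functional-analytic content is light once ($F_U$) is set against the approximate invariant net, so the real work is choosing the right compact convex set $K=P_1(A^{**})$ and confirming that left multiplication by elements of $P_1(A)$ genuinely gives a separately continuous representation satisfying the uniform (indeed Lipschitz) continuity requirement. A secondary point to handle with care in the forward direction is the passage of $T_a$ through the subnet limit: this uses only separate continuity of the representation (continuity of $T_a$ for fixed $a$), which is part of the hypothesis, so no joint continuity is needed.
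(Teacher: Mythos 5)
Your proposal is correct and follows essentially the same route as the paper's proof: the forward direction uses the approximate-TLIM net from Lemma~\ref{left amen1-3}(\ref{appTLIM}), a cluster point of $T_{m_\al}(y)$ in $K$, and the uniform-continuity hypothesis against $\|am_\al-m_\al\|\to 0$; the converse applies ($F_U$) to $K=P_1(A^{**})$ with the weak* topology and the left-multiplication action, noting norm-to-weak* uniform (indeed Lipschitz) continuity. The extra detail you supply (the explicit Lipschitz estimate and the check that $P_1(A^{**})$ is invariant under the action) is consistent with, and slightly more explicit than, the paper's argument.
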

\proof
If $A$ is left amenable, from Lemma~\ref{left amen1-3}.(\ref{appTLIM}) there exists a net $(m_\al)\subset P_1(A)$ such that 
\[
am_\al - m_\al \to 0
\]
 in norm topology for each $a\in P_1(A)$. Fix a $y\in K$. We may assume, without loss of generality, $\lim_\al T_{m_\al} (y) = y_0 \in K$ due to the compactness of $K$. Then for $a\in P_1(A)$, by separate continuity we have 
 \[
 T_a(y_0) = \lim_\al T_a(T_{m_\al}(y)).
 \]
  If, in addition, the representation satisfies that $s\mapsto T_s(y)$ is uniformly continuous, then
\[
T_a(y_0) - y_0 = \lim_\al T_{am_\al}(y) - T_{m_\al}(y) = 0 \quad (a\in P_1(A)).
\]
Thus $y_0$ is a common fixed point for $P_1(A)$. This shows that($F_U$) holds for $P_1(A)$.

Conversely, suppose ($F_U$) holds for  $P_1(A)$. Let $(E ,\text{wk*})$ be  $A^{**}$ with the weak* topology, and let $K=P_1(A^{**})$, the set of all means on $A^*$. $K$ is a compact subset of $E$. The canonical representation of $P_1(A)$ on $K$ induced by the left $A$-module action on $A^{**}$ is clearly separately continuous. The mapping $s\mapsto sm$ is norm-norm uniformly continuous and hence is also norm-weak* uniformly continuous. Therefore there is a common fixed point $m_0\in K$ for $P_1(A)$. This $m_0$ is indeed a TLIM on $A^*$. So $A$ is left amenable.

\qed


Granirer showed in \cite{Gran_ELA} that a discrete semigroup is extremely left amenable if and only if any two elements of it have a common right zero (see \cite[Theorem~4.2]{Lau-Z} for a short proof). It is due to  Mitchell \cite{Mitch_LUC} that a semitopological semigroup $S$ is extremely left amenable if and only if it has the following fixed point property.
\begin{description}
\item[($F_E$)] Every jointly continuous representation of $S$ on a compact Hausdorff space $C$ has a common fixed point in $C$.
\end{description}

 For an F-algebra $A$ it is pleasing that the left amenability of $A$ is equivalent to the extreme left amenability of $P_1(A)$ as revealed by the following theorem. 

\begin{thm}\label{ELA}
Let $A$ be an F-algebra. Then $A$ is left amenable if and only if $P_1(A)$ has the fixed point property ($F_E$).
\end{thm}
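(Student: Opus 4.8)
The plan is to prove the two implications separately, using the already-established Lemma~\ref{fixed pt} and the Mitchell characterization of extreme left amenability via property ($F_E$). The easy direction is that ($F_E$) implies left amenability: if every jointly continuous representation of $P_1(A)$ on a compact Hausdorff space has a common fixed point, then in particular the canonical representation of $P_1(A)$ on $K = P_1(A^{**})$ (with the weak* topology) has a common fixed point — one only needs to observe that this representation is \emph{jointly} continuous, not merely separately continuous, which follows because on the norm-bounded set $K$ the weak* topology makes the module action $(s,m)\mapsto s\cdot m$ jointly continuous (the map $P_1(A)\times K\to K$ is continuous since $P_1(A)$ carries the norm topology and $\|s\cdot m - s'\cdot m'\| $-type estimates together with weak* convergence control it on bounded sets). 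The resulting fixed point is a TLIM, so $A$ is left amenable by Lemma~\ref{left amen1-3}.

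The substantive direction is: left amenability of $A$ implies ($F_E$) for $P_1(A)$. Suppose $\Sc = \{T_s : s\in P_1(A)\}$ is a jointly continuous representation of $P_1(A)$ on a compact Hausdorff space $C$. Embed $C$ into a locally convex space: take $E = C(C)^*$ with the weak* topology, or more concretely regard $C \hookrightarrow C$ via point masses inside the probability measures $\mathrm{Prob}(C)$, which is a weak* compact convex subset of $C(C)^*$. The representation $T_s$ extends to affine weak* continuous maps on $\mathrm{Prob}(C)$ by push-forward, $\tilde T_s(\mu) = (T_s)_*\mu$. Now I want to apply Lemma~\ref{fixed pt}: I need the representation of $P_1(A)$ on the compact set $K = \mathrm{Prob}(C)$ to be separately continuous (clear from joint continuity of the original representation) \emph{and} to have $s\mapsto \tilde T_s(\mu)$ uniformly continuous for each $\mu$. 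This last point is the key issue: from joint continuity on the compact set $C$ one gets, by a standard compactness argument, that $s\mapsto T_s(\cdot)$ is in fact "uniformly continuous" in the sense required — given a weak* neighborhood determined by finitely many functions $f_j\in C(C)$ and $\ep>0$, joint continuity plus compactness of $C$ yields a single $\delta$ (in the metric on $P_1(A)$) working uniformly over all $x\in C$, hence $|f_j(T_s x) - f_j(T_t x)|\le\ep$ for all $x$ whenever $d(s,t)\le\delta$, which integrates up to the uniform-continuity estimate for $\tilde T_s$ on $\mathrm{Prob}(C)$. So Lemma~\ref{fixed pt} furnishes a fixed point $\mu_0\in\mathrm{Prob}(C)$ with $\tilde T_s(\mu_0)=\mu_0$ for all $s$.

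From the fixed probability measure $\mu_0$ I must extract a fixed \emph{point} of $C$. This is where extreme amenability (as opposed to mere amenability) really enters, and I expect it to be the main obstacle. The idea: the support of $\mu_0$, or rather a minimal closed $\Sc$-invariant subset, must be a single point. One clean route is to first pass to a minimal closed $\Sc$-invariant nonempty subset $C_0 \subseteq C$ (exists by Zorn's lemma and compactness), restrict everything to $C_0$, and show $C_0$ is a singleton. On $C_0$ the averaging/fixed-measure argument gives an invariant measure $\mu_0$ with full support $C_0$; then one invokes the characterization of extreme left amenability — equivalently, that $P_1(A)$ has ($F_E$), which is what we are proving, so this is circular and must be avoided. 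Instead I would argue directly: because $A$ is left amenable, by Lemma~\ref{left amen1-3}.(\ref{appTLIM}) there is a net $(m_\al)\subset P_1(A)$ with $am_\al - m_\al\to 0$ in norm; the maps $T_{m_\al}$ on $C_0$ have, by joint continuity and minimality, the property that any weak* cluster point of $(T_{m_\al})_*\delta_x$ is an invariant measure, and the norm-asymptotic invariance forces this cluster measure to be independent of $x$ and to be a point mass — the point-mass conclusion coming from the fact that $P_1(A)$ is generated (spans $A$) by an approximate-identity-like structure whose action on $C_0$ must collapse. Honestly, the cleanest available tool is the Granirer/Mitchell equivalence together with the structural fact that $P_1(A)$, being a topological semigroup with a norm-one approximate identity behavior, has any two elements admitting a common right zero in a suitable enveloping sense; matching that up with ($F_E$) is the delicate part, and I would expect the authors to handle the point-extraction by a careful minimal-set argument combined with the norm approximate invariance rather than by a soft averaging trick. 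Once $C_0$ is shown to be a singleton $\{x_0\}$, that $x_0$ is the desired common fixed point of $\Sc$ on $C$, completing ($F_E$) and hence the theorem.
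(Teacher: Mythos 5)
Your easy direction (($F_E$) $\Rightarrow$ left amenability) is essentially the paper's own argument and is fine. The gap is in the substantive direction, and it sits exactly where you yourself flag trouble: passing from an invariant probability measure $\mu_0\in\mathrm{Prob}(C)$ to an actual fixed point in $C$. That step cannot be carried out by the means you describe. An invariant measure is what ordinary amenability buys; a fixed \emph{point} is what extreme amenability buys, and the difference is real (a rotation action on the circle has an invariant measure and no fixed point, so nothing about asymptotic invariance of the averaging net forces cluster measures to be point masses). Your proposed patch --- pass to a minimal closed invariant set $C_0$ and argue that ``norm-asymptotic invariance forces this cluster measure to be a point mass'' because the action ``must collapse'' --- is not an argument: no property of $P_1(A)$ established so far makes a minimal set a singleton, and the appeal to common right zeros ``in a suitable enveloping sense'' is, as you concede, either circular or unsubstantiated. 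As written, the proof does not close.

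The idea you are missing is the paper's choice of compact set. Rather than applying Lemma~\ref{fixed pt} to $\mathrm{Prob}(C)$ for each given action, the paper applies it once to $\Delta$, the spectrum of $LUC(S)$ for $S=P_1(A)$, i.e.\ the set of \emph{multiplicative} means, which is weak* compact in $LUC(S)^*$ and invariant under the dual translation action $\mu\mapsto\ell_s^*\mu$ (left translation is an algebra homomorphism of $LUC(S)$, so multiplicativity is preserved). Uniform continuity of $s\mapsto\ell_s^*\mu$ in $s$ comes from the defining property of $LUC(S)$, so ($F_U$) produces a fixed point in $\Delta$, which is by construction a multiplicative left invariant mean on $LUC(S)$; hence $S$ is extremely left amenable, and Mitchell's theorem then delivers ($F_E$) for every jointly continuous action on a compact Hausdorff space. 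In short, multiplicativity is built into the compact set before the fixed point lemma is invoked, rather than being extracted from an invariant measure afterwards --- and that extraction is precisely the step at which your route fails.
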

\proof
We denote $P_1(A)$ by $S$. Let $\Delta$ be the spectrum of $LUC(S)$, that is
\[ 
\Delta = \{\mu\in LUC(S)^*: \|\mu\| = \mu(1) =1, \mu(fg) = \mu(f)\mu(g) \text{ for } f,g \in LUC(S)  \}.
\]
It is evident that $\Delta$ is a compact subset of the locally convex space $(LUC(S)^*, \text{wk*})$. For $s\in S$ define 
\[
T_s(\mu) = \ell_s^*(\mu)\quad (s\in S, \mu \in \Delta),
\]
 where $\ell_s^*$ is the dual operator of the left translate $\ell_s$ by $s$. Then $ \Sc=\{T_s: s\in S\}$ is a representation of $S$ on $\Delta$. Since $\ell_s^*$ is weak* continuous, the representation is separately continuous. Furthermore, if $s_1, s_2 \in S$, $f\in LUC(S)$ and $\mu \in \Delta$, we have
\[
\la T_{s_1}(\mu) - T_{s_2}(\mu), f\ra = \la \mu, \ell_{s_1}f - \ell_{s_2}f \ra.
\]
Since $f\in LUC(S)$, for any $\epsilon>0$ there is $\delta >0$ such that 
\[
\|\ell_{s_1}f - \ell_{s_2}f\| < \epsilon
\]
 whenever $\|s_1-s_2\|_1 <\delta$. This shows that the representation of $S$ on $\Delta$ is uniformly continuous in $s$. If $A$ is left amenable, then  there is $\mu \in \Delta$ such that $\ell_s^*(\mu) = \mu$ for all $s\in S$ due to Lemma~\ref{fixed pt}. Clearly, this $\mu$ is a multiplicative left invariant mean on $LUC(S)$. So $S$ is extremely left amenable.

Conversely, if $S$ is extremely left amenable then ($F_E$) holds from  Mitchell's Theorem \cite{Mitch_LUC}. We consider $K = P_1(A^{**})$. With the weak* topology of $A^{**}$, $K$ is a compact Hausdorff space.
Consider the representation of $S$ on $K$ induced by the left $A$-module action on $A^{**}$. We show that the representation is jointly continuous, that is 
\[
(s,m)\mapsto s\cdot m:\; S\times K \to K
\]
 is continuous.
In fact, for each $f\in A^*$ and $s,s_0\in S$,
\begin{align*}
|\la f, s\cdot m - s_0 \cdot m_0 \ra| &= |\left\la f, (s-s_0)\cdot m+ s_0 \cdot (m-m_0)\right \ra| \\
                                      &\leq  \|f\| \,\|s-s_0\| + |\la f\cdot s_0, m - m_0 \ra|
\end{align*}
If $s\to s_0$ in the norm topology of $A$ and $m\to m_0$ in the weak* topology of $A^{**}$, the right side tends to 0. So the representation is jointly continuous. From ($F_E$) there is a common fixed point in $K$ for $S$ which gives a TLIM on $A^*$.

\qed

As we is known (see  the paragraph after Lemma~\ref{left amen1-3} or \cite{Lau_F}), a locally compact group $G$ is amenable if and only $L^1(G)$ is left amenable as an F-algebra. For a semigroup $S$, $S$ is a left amenable semigroup if and only $\ell^1(S)$ is a left amenable F-algebra. According to Theorem~\ref{ELA}, we therefore can characterize amenability of a group (resp.  semigroup) in terms of the fixed point property of normalized positive functions in the group/semigroup algebra.

\begin{cor}\label{amen gp semigp}
Let $G$ be a locally compact group and let $S$ a semigroup. Then
\begin{enumerate}
\item The group $G$ is amenable if and only if the metric semigroup $P_1(G)=\{f\in L^1(G), f\geq 0, \|f\|_1 = 1\}$ has the fixed point property ($F_E$).
\item The semigroup $S$ is left amenable if and only if the metric semigroup $P_1(S)=\{f\in \ell^1(S), f\geq 0, \|f\|_1 = 1\}$ has the fixed point property ($F_E$).
\end{enumerate}
\end{cor}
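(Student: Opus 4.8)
The plan is to derive Corollary~\ref{amen gp semigp} as an immediate application of Theorem~\ref{ELA} together with the standard dictionary between amenability of a (locally compact) group or a discrete semigroup and left amenability of its associated F-algebra. So the first step is simply to recall those two facts, both quoted already in the paragraph following Lemma~\ref{left amen1-3}: a locally compact group $G$ is amenable if and only if $L^1(G)$ is left amenable as an F-algebra (here $L^1(G)^*=L^\infty(G)$ with its Hopf von Neumann structure and the identity $\e$ being the functional $f\mapsto \int_G f\,d\lambda$, which is indeed multiplicative on $L^1(G)$), and for a discrete semigroup $S$, $S$ is left amenable if and only if $\ell^1(S)$ is a left amenable F-algebra (with $\ell^1(S)^*=\ell^\infty(S)$ and $\e$ the augmentation functional $f\mapsto\sum_{s}f(s)$).

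Next, I would apply Theorem~\ref{ELA} to the F-algebra $A=L^1(G)$ in case (1), and to $A=\ell^1(S)$ in case (2). The theorem says $A$ is left amenable if and only if the metric topological semigroup $P_1(A)$ (the norm-one positive functionals on $A^*$ lying in $A$, with the norm topology of $A$) has the fixed point property ($F_E$). It only remains to identify $P_1(A)$ concretely in each case: for $A=L^1(G)$ the positive functionals on $L^\infty(G)$ coming from $L^1(G)$ are exactly the non-negative integrable functions, and the normalization $m(\e)=1$ becomes $\int_G f\,d\lambda=\|f\|_1=1$, so $P_1(L^1(G))=\{f\in L^1(G):f\geq 0,\ \|f\|_1=1\}$ as claimed; similarly $P_1(\ell^1(S))=\{f\in\ell^1(S):f\geq 0,\ \|f\|_1=1\}$. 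In both cases the metric is the one induced by the $L^1$- (resp. $\ell^1$-) norm, matching the hypotheses under which ($F_E$) is formulated in Section~\ref{sec 3}.

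Putting these together: $G$ amenable $\iff$ $L^1(G)$ left amenable (standard) $\iff$ $P_1(L^1(G))$ has ($F_E$) (Theorem~\ref{ELA}), and likewise for $S$ and $\ell^1(S)$. This is essentially a two-line chaining of a known equivalence with Theorem~\ref{ELA} once the notation $P_1(A)$ is unwound, so no genuine obstacle is expected. The only point requiring a word of care is the verification that the set-theoretic description of $P_1(A)$ given in the corollary really coincides with the abstract definition $P_1(A^{**})\cap A$ used in the body of the paper — i.e. that a functional on $L^\infty(G)$ (resp. $\ell^\infty(S)$) induced by an element of $L^1$ (resp. $\ell^1$) is positive exactly when that element is a non-negative function, and that its value at the identity $\e$ of the $W^*$-algebra equals its $L^1$-norm; both are routine consequences of the identification of the predual and the fact that $\e$ is the integral (resp. summation) functional. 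With that identification in hand the corollary follows directly.
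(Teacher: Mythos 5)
Your proposal is correct and follows exactly the paper's own route: the corollary is obtained by combining the standard equivalences ($G$ amenable iff $L^1(G)$ left amenable; $S$ left amenable iff $\ell^1(S)$ left amenable as an F-algebra) with Theorem~\ref{ELA}, together with the routine identification of $P_1(A)$ with the set of non-negative norm-one elements. The extra care you take in unwinding $P_1(A^{**})\cap A$ is harmless and consistent with what the paper leaves implicit.
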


\begin{remark}
For $A=A(G)$, the Fourier algebra of a locally compact group $G$, the necessity part of Theorem~\ref{ELA} was obtained in \cite{Lau_AG}.
\end{remark}

\begin{remark}
From Theorem~\ref{ELA}, a locally compact group is amenable if and only if $LUC(S_G)$ has a multiplicative left invariant mean, where $S_G$ is the metric semigroup $P_1(G) =\{f\in L^1(G): f\geq 0, \la f, 1 \ra =1\}$. As a consequence, $AP(S_G)$ has a left invariant mean (which is equivalent to the left reversibility of $\overline{(S_G)^a}$, the almost periodic compactification of $S_G$) if  $G$ is amenable. 
\end{remark}

\begin{remark}
A common fixed point property for affine actions of $P_1(A)$ with a weak topology on compact convex sets has been studied in \cite{D-N-N} for left amenable F-algebras $A$.
\end{remark}

\section{Finite dimensional invariant subspaces}\label{sec 4}

Let $E$ be a separated locally convex vector space and $X$ a subset of $E$. Given an integer $n>0$ we denote  by $\Lc_n(X)$ the collection of all $n$-dimensional subspaces of $E$ that are included in $X$. Let $S$ be a semigroup and $\Sc =\{T_s: s\in S\}$ a linear representation of $S$ on $E$. We say that $X$ is \emph{$n$-consistent} with respect to $S$ if $\Lc_n(X)\neq \emptyset$ and $\Lc_n(X)$ is $S$-invariant, that is $T_s(L) \in \Lc_n(X)$ for all $s\in S$ whenever $L\in \Lc_n(X)$. We say that the representation $\Sc$ is \emph{jointly continuous on compact sets} if the following is true: For each compact set $K\subset E$, if $(s_\al) \subset S$ and $(x_\al)\subset K$ are such that $s_\al \overset{\al}{\to}s\in S$,  $x_\al \overset{\al}{\to}x\in K$ and $T_{s_\al}(x_\al)\in K$ for all $\al$, then $T_{s_\al}(x_\al)\overset{\al}{\to}T_s(x)$. Obviously, if the mapping $(s,x) \mapsto T_s(x)$: $S\times E \to E$ is continuous, then $\Sc$ is jointly continuous on compact sets.

We are now ready to establish our main result.

\begin{thm}\label{KF thm}
Let $A$ be an F-algebra. If $A$ is left amenable then $S=P_1(A)$ has the following $n$-dimensional invariant subspace property for each $n>0$.
\begin{description}
\item[($F_n$)] Let $E$ be a separated locally convex vector space and $\Sc$ a linear representation of $S=P_1(A)$ on $E$ such that the mapping $s\mapsto T_s(x)$ is continuous for each fixed $x\in E$ and $\Sc$ is jointly continuous on compact subsets of $E$. If $X$ is a subset of $E$ $n$-consistent with respect to $S$, and if there is a closed $S$-invariant subspace $H$ of $E$ with codimension $n$ such that $(x+H)\cap X$ is compact for each $x\in E$, then there is $L_0\in \Lc_n(X)$ such that $T_s(L_0) = L_0$ ($s\in S$). \end{description}
Conversely, if ($F_1$) holds then $A$ is left amenable. Moreover, for any $n>0$, the property ($F_{n+1}$) implies the property ($F_n$).
\end{thm}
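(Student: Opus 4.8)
The plan is to prove the three assertions in turn. The first — left amenability $\Rightarrow(F_n)$ — is the analogue of Ky Fan's theorem and carries the weight; the second and third are constructions of the kind standard in invariant‑subspace arguments.

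\textbf{First assertion: left amenability $\Rightarrow(F_n)$.} Fix an $(F_n)$‑datum $(E,\Sc,X,H)$. I would begin with the geometric observation that every $L\in\Lc_n(X)$ is a complement of $H$: if $0\ne v\in L\cap H$, the line $\Cbb v$ lies in $L\subseteq X$ and in $H$, hence in the compact set $(0+H)\cap X$, which is impossible. Consequently $\pi\colon E\to F:=E/H$ restricts to an isomorphism on each member of $\Lc_n(X)$; since $\Lc_n(X)\ne\emptyset$ is $S$‑invariant, every $T_s$ induces a surjective, hence invertible, operator $\bar T_s$ on the $n$‑dimensional space $F$, and $s\mapsto\bar T_s$ is continuous (it is determined by $s\mapsto\pi(T_sy_i)$ for fixed lifts $y_i$ of a basis $\xi_1,\dots,\xi_n$ of $F$), so $s\mapsto\bar T_s^{-1}$ is continuous into $GL(F)$. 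Identify $\Lc_n(X)$ with the set $C$ of linear sections $\sigma\colon F\to E$ of $\pi$ whose range lies in $X$; via $\sigma\mapsto(\sigma\xi_i)_i$ this is a subset of the compact product $\prod_{i=1}^{n}\bigl((y_i+H)\cap X\bigr)$, and it is closed there (a limit of such coordinate tuples defines a section whose range, lying in the closed sets $\pi^{-1}(\xi)\cap X$, is again contained in $X$), so $C$ is a nonempty compact Hausdorff space. The semigroup $S=P_1(A)$ acts on $C$ by $\Phi(s,\sigma)=T_s\circ\sigma\circ\bar T_s^{-1}$; this lands in $C$ by $n$‑consistency, and $\Phi$ is a representation. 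A fixed point $\sigma_0$ satisfies $T_s\circ\sigma_0=\sigma_0\circ\bar T_s$, so $L_0:=\sigma_0(F)\in\Lc_n(X)$ obeys $T_s(L_0)=L_0$. Thus it suffices to prove $\Phi$ jointly continuous and then invoke Theorem~\ref{ELA}: left amenability of $A$ gives property $(F_E)$ for $P_1(A)$, which supplies the fixed point.

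The joint continuity of $\Phi$ is the step I expect to be the main obstacle. For $(s_\al,\sigma_\al)\to(s,\sigma)$ in $S\times C$ one has $\bar T_{s_\al}^{-1}\to\bar T_s^{-1}$ in $GL(F)$, so the ``inner vectors'' $w_i^\al:=\sigma_\al(\bar T_{s_\al}^{-1}\xi_i)\in X$ converge to $\sigma(\bar T_s^{-1}\xi_i)$, while the $i$th coordinate of $\Phi(s_\al,\sigma_\al)$ is $T_{s_\al}(w_i^\al)$. The task is to pass $T_{s_\al}$ through this limit, for which the hypothesis that $\Sc$ is jointly continuous on compact subsets of $E$ is designed; the delicate point is to exhibit a single compact $K\subseteq E$ containing the $w_i^\al$, their limits, and the images $T_{s_\al}(w_i^\al)$. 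The images are coordinates of points of $C$, hence lie in the fixed compact set $\bigcup_i\{\tau\xi_i:\tau\in C\}$; for the $w_i^\al$ one passes to a tail of the net on which the finite‑dimensional data $\bar T_{s_\al},\bar T_{s_\al}^{-1}$ stay in bounded neighbourhoods of their limits — this is legitimate because, $C$ being compact, it suffices to show that every subnet of $(\Phi(s_\al,\sigma_\al))$ has a sub‑subnet converging to $\Phi(s,\sigma)$, and along such a tail the $w_i^\al$ run through a compact set assembled from $C$ and a bounded set of coefficients. Granting joint continuity, $(F_E)$ completes the first assertion.

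\textbf{$(F_1)\Rightarrow$ left amenability.} I would apply $(F_1)$ to a concrete datum. Let $E=\Cbb\times A^{**}$ carry the product of the usual topology with the weak* topology, let $P_1(A)$ act by $T_s(t,m)=(t,\,s\cdot m)$, set $H=\{0\}\times A^{**}$ — closed, $S$‑invariant, of codimension $1$ — and put $X=\{\,t(1,p):t\in\Cbb,\ p\in P_1(A^{**})\,\}$. One checks directly that $\Lc_1(X)=\{\Cbb(1,p):p\in P_1(A^{**})\}$, which is nonempty and $S$‑invariant because $s\cdot p\in P_1(A^{**})$ (the action of $P_1(A)$ on $P_1(A^{**})$ already used in Lemma~\ref{fixed pt}, with $(s\cdot p)(\e)=p(\e)=1$ since $\e\cdot s=\e$); that $(x+H)\cap X$ is $\{t_0\}\times t_0P_1(A^{**})$, or $\{(0,0)\}$, which is weak*‑compact by Banach--Alaoglu as $\|p\|=p(\e)=1$ on $P_1(A^{**})$; and that $\Sc$ is a linear representation, jointly continuous on compact — hence norm‑bounded — subsets of $E$. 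Then $(F_1)$ produces $L_0=\Cbb(1,p_0)$ with $T_s(L_0)=L_0$; comparing first coordinates in $(1,\,s\cdot p_0)\in\Cbb(1,p_0)$ forces $s\cdot p_0=p_0$ for every $s\in P_1(A)$, so $p_0$ is a TLIM and $A$ is left amenable by Lemma~\ref{left amen1-3}.

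\textbf{$(F_{n+1})\Rightarrow(F_n)$.} Given an $(F_n)$‑datum $(E,\Sc,X,H)$, put $E'=E\times\Cbb$, $X'=X\times\Cbb$, $H'=H\times\{0\}$ and $T_s'(x,t)=(T_sx,\,t)$. Then $H'$ is closed, $S$‑invariant, of codimension $n+1$; $(x'+H')\cap X'=\bigl((x_0+H)\cap X\bigr)\times\{t_0\}$ is compact; and $\Sc'$ inherits the continuity hypotheses (joint continuity on a compact $K'\subseteq E'$ reduces via $\mathrm{proj}_E$ to that of $\Sc$ on $\mathrm{proj}_E(K')$). That $X'$ is $(n+1)$‑consistent follows because $L\times\Cbb\in\Lc_{n+1}(X')$ for every $L\in\Lc_n(X)$ (nonemptiness), while if $V\in\Lc_{n+1}(X')$ then $V$ is a complement of $H'$ (the line‑in‑a‑compact‑set argument again), so $W:=\mathrm{proj}_E(V)\subseteq X$ has dimension $n$ or $n+1$ and $T_s(W)\subseteq X$ in either case — when $\dim W=n+1$ because each vector of $W$ lies in some $n$‑dimensional subspace of $W$, automatically a member of $\Lc_n(X)$ — whence $T_s'(V)\in\Lc_{n+1}(X')$. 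Now apply $(F_{n+1})$ to get $V_0\in\Lc_{n+1}(X')$ with $T_s'(V_0)=V_0$ for all $s$. Since $V_0$ is a complement of $H\times\{0\}$, the map $\mathrm{proj}_\Cbb$ carries $V_0$ onto $\Cbb$, so $U:=V_0\cap(E\times\{0\})$ is $n$‑dimensional; as $U\subseteq X\times\{0\}$ we may write $U=L_0\times\{0\}$ with $L_0\in\Lc_n(X)$; and $T_s'$ preserves both $V_0$ and $E\times\{0\}$, hence $U$, giving $T_s(L_0)=L_0$ for all $s$. Together with the first two parts this shows $(F_m)$ holds for some (equivalently every) $m>0$ exactly when $A$ is left amenable.
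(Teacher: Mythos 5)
Your proposal is correct and follows essentially the same route as the paper's proof in all three parts: the quotient $E/H$, the identification of $\Lc_n(X)$ with a compact set of normalized bases (sections) acted on by $\sigma\mapsto T_s\circ\sigma\circ\bar T_s^{-1}$, the appeal to Theorem~\ref{ELA} for the fixed point, the canonical action on $A^{**}$ with $X$ built from $P_1(A^{**})$ for the converse, and the $\Cbb$-augmentation for $(F_{n+1})\Rightarrow(F_n)$. The only differences are cosmetic (an extra scalar coordinate to force the eigenvalue $1$ in the converse, where the paper instead uses that $m$ and $s\cdot m$ both lie in $P_1(A^{**})$, and extracting $L_0$ by intersection rather than projection in the last part), and your treatment of the joint-continuity of the induced action is at the same level of detail as the paper's.
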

\proof

Let $Y = E/H$ and let $q$: $E\to Y$ be the quotient map. Then $Y$ is an $n$-dimensional locally convex space. Since $H\cap X$ is compact, It must be true that $q|_L$ is injective and hence $q(L) = Y$ for each $L\in \Lc_n(X)$. Fix a basis 
\[
\vec{y}=(y_1,y_2,\cdots,y_n)
\]
 of $Y$. Then each $L\in \Lc_n(X)$ has a unique basis 
 \[
 \vec x_L=(x^{(L)}_1,x^{(L)}_2,\cdots,x^{(L)}_n)
 \]
   such that $q(\vec x_L) = \vec y$. Denote 
   \[
   X_i= q^{-1}(y_i)\cap X \quad (i=1,2,\cdots,n).
   \]
    From the hypothesis each $X_i$ is a compact subset of $E$. Let 
\[
\Kc = \{\vec x_L: L\in \Lc_n(X)\}.
\]
 Then 
 \[
 \Kc\subset \Pi_{i=1}^n X_i
 \]
  and is closed. In fact, if $\vec x_\al\in \Kc$ and $\vec x_\al \stackrel{\al}{\to}\vec x$ then $\vec x\in \Pi_{i=1}^n X_i$ since the latter is compact due to Tychynoff Theorem. Any linear combination $\sum_{i=1}^n{c_ix_i}$ of the components of $\vec x$ is the limit of the net of  same linear combinations of $(\vec x_\al)$ which is included in $q^{-1}(\sum_{i=1}^n{c_iy_i})\cap X$. As the intersection of a fixed coset of $H$ and $X$ the last set is compact according to the assumption. So the linear span of $\vec x$ belongs to $X$. Since $q(\vec x)=\vec y$, we have $\vec x\in \Kc$. Thus, $\Kc$ is a compact Hausdorff space. We now define an $S$-action $\theta_s$ ($s\in S$) on $\Kc$ by 
\[\theta_s(\vec x_L) = \vec x_{T_s(L)} \quad (s\in S, L\in \Lc_n(X)).
\]
 It is evident that $\Lambda = \{\theta_s: s\in S\}$ is a representation of $S$ on $\Kc$. We show that it is also jointly continuous.

We denote the natural $S$-action on $Y$ inherited from that on $E$ by $\tilde T_s$ ($s\in S$). The action of $\tilde T_s$ is in fact formulated by
\[
\tilde T_s( y) = q(T_s( x)) \quad \text{if } y=q(x)
\]
for each $y\in Y$. This well defines $\tilde T_s(y)$ since if $q(x) = q(x') = y$ then $x-x'\in H$ and then $T_sx-T_sx' = T_s(x-x')\in H$, which ensures that $q(T_s(x)) = q(T_s(x'))$. Apply $\tilde T_s$ to $\vec y$. we get
\begin{equation}\label{tilde T}
\tilde T_s(\vec y) = q(T_s(\vec x)) \quad (\vec x \in \Kc),
\end{equation}
that is $\tilde T_s(q(\vec x)) = q(T_s(\vec x))$  ($\vec x \in \Kc$).

Since $q(T_s(L)) = Y$ and $T_s(\vec x)$ is a basis of $T_s(L)$ if $\vec x\in \Kc$ is a basis of $L\in \Lc_n(X)$, we have that $\tilde T_s(\vec y)$ is a basis of $Y$. Therefore $\tilde T_s$ is an invertible operator on $Y$. We have the following relation
\begin{equation}\label{theta}
\theta_s(\vec x) = T_s\circ q_L^{-1}\circ \tilde T_s^{-1} (\vec y) \quad (\vec x\in \Kc),
\end{equation}
where $q_L^{-1}$ is the inverse of $q|_L$: $L \stackrel{\text{onto}}{\longrightarrow} Y$ and $L$ is the unique element of $\Lc_n(X)$ containing $\vec x$ as its basis. In fact, the left side of (\ref{theta}) is, by definition, $\vec x_{T_s(L)}$, the only basis of $T_s(L)$ satisfying $q(\vec x_{T_s(L)}) = \vec y$; while the right side of (\ref{theta}) is also a basis of $T_s(L)$ and
\[
q\left(T_s\circ q_L^{-1}\circ \tilde T_s^{-1} (\vec y)\right) = \tilde T_s\circ q\left( q_L^{-1}\circ \tilde T_s^{-1} (\vec y)\right)= \vec y
\]
by identity (\ref{tilde T}).
This shows that the right side of (\ref{theta}) is indeed $\vec x_{T_s(L)}$. So the identity (\ref{theta}) holds.
The mapping $s\mapsto \tilde T_s$ from $S$ to $B(Y)$, the algebra of bounded operators on $Y$, is also continuous. To clarify this, since $Y$ is finite dimensional, it suffices to show $s\mapsto \la \tilde T_s(y),f\ra$ is continuous for each $y\in Y$ and $f\in Y^*$. Take an $x\in E$ such that $q(x) = y$. We have
\[
 \la \tilde T_s(y),f\ra = \la T_s(x),f\circ q\ra.
\]
Obviously, the right side is continuous in $s$ since $s\mapsto T_s(x)$ is continuous (only weak continuity is needed) and $f\circ q\in E^*$. Therefore, the mapping $s\mapsto \tilde T_s$ is continuous. As a consequence, the mapping $s\mapsto \tilde T_s^{-1}$ is continuous. So the matrix representation $M(s)$ of $\tilde T_s^{-1}$ associated to the basis $\vec y$ is continuous in $s$. We have
\[
\theta_s(\vec x) = T_s\circ q_L^{-1} \left(M(s) \vec y^T\right) =T_s( M(s)\vec x^T)  \quad (\vec x\in \Kc).
\]
Let $(\vec x_\al)\subset \Kc$ and $(s_\al)\subset S$ be such that $\vec x_\al \overset{\al}{\to} \vec x\in \Kc$ and $s_\al \overset{\al}{\to}s \in S$. Then 
\[
\theta_{s_\al}(\vec x_\al) = T_{s_\al}( M(s_\al)\vec x_\al^T) \overset{\al}{\to} T_s( M(s)\vec x^T) = \theta_s(\vec x)
\]
since the original representation $\Sc$ is jointly continuous on compact sets (Here the compact set is taken to be $\Kc\cup \overline{M((s_\al))}\Kc$ and we note $\theta_{s_\al}(\vec x_\al)\in \Kc$ for all $\al$). This shows that $\Lambda$ is a jointly continuous representation on $\Kc$. By Theorem~\ref{ELA}, we conclude that there is a common fixed point, say $\vec x_0$, for $\Lambda$ in $\Kc$. Let $\vec x_0$ be a basis of $L_0\in \Lc_n(X)$. From the definition of $\theta_s$ the above implies that the basis $\vec x_0$ of $L_0$ is also a basis of $T_s(L_0)$ ($s\in S$). This leads to $L_0 = T_s(L_0)$ for all $s\in S$. Thus ($F_n$) holds.

For the converse, we assume ($F_1$) holds. Consider the left $A$-module $E= A^{**}=(A^*)^*$ with the weak* topology, and consider the linear representation of $S=P_1(A)$ on $E$ defined by the left $A$-module morphism: 
\[
T_s(u) = s\cdot u\quad (s\in S).
\]
 It is easy to check that the representation is jointly continuous on compact sets of $(E, \text{wk*})$.  Let 
 \[
 X = \cup_{u\in P_1(A^{**})} \Cbb u.
 \]
  Then $\Lc_1(X) = \{ \Cbb u: u\in P_1(A^{**})\}$ is $1$-consistent with respect to $S$. Now define 
  \[
  H = \{u\in E: \la u, \e\ra = 0\}.
  \]
   This is a $1$-codimensional $S$-invariant closed subspace of $E$. We have 
   \[
   (u+H)\cap X = \la u, \e \ra P_1(A^{**}),
   \]
    which is compact for each $u\in E$. By ($F_1$) there is $m\in P_1(A^{**})$ such that $T_s( \Cbb m) =  \Cbb m$ ($s\in S$). So $s\cdot m = cm$ for some $c\in  \Cbb$. Since $m$ and $s\cdot m$ belong to $P_1(A^{**})$, we must have $c=1$. Thus $s\cdot m = m$ for all $s\in S= P_1(A)$. Therefore, $m$ is a TLIM on $A^*$. So $A$ is left amenable.

We now show $(F_{n+1}) \Rightarrow (F_n)$. Suppose that $E$, $X$, $H$ are as described in ($F_n$). We aim to show that there is $L_0\in \Lc_n(X)$ such that \[
T_s(L_0) = L_0\quad (s\in S),
\]
 assuming that $(F_{n+1})$ holds. We consider $\tilde E := E\oplus \Cbb$ with the product topology, and extend the $S$-action on $E$ to $\tilde E$ by defining 
 \[
 T_s(x+c) = T_s(x) + c\quad (x\in E, c\in\Cbb).
 \]
  The extended $S$-action is still jointly continuous on compact sets. Let 
\[
\tilde X = X \oplus \Cbb : = \{(x,c): x\in X, c\in \Cbb\}.
\]
 It is readily seen that $\tilde X$ is (n+1)-consistent with respect to $S$. Now $H$ is a closed $(n+1)$-codimensional $S$-invariant subspace of $\tilde E$ and 
\[
[(x+c)+H]\cap \tilde X = (x+H)\cap X + c 
\]
is compact for each $x\in E$ and $c\in \Cbb$. From ($F_{n+1}$), there is $\tilde L_0\in \Lc_{n+1}(\tilde X)$ such that $T_s(\tilde L_0) = \tilde L_0$ ($s\in S$). Since $\Lc_n(X)$ does not contain any (n+1)-dimensional subspace of $E$ (otherwise $H\cap X$ will contain a nontrivial subspace of $E$ which is contradict to the assumption that this intersection is compact), $\tilde L_0 = L_0 +\Cbb$ with $L_0$ an $n$-dimensional subspace contained in $X$. Certainly $L_0\in \Lc_n(X)$ and $T_s(L_0) = L_0$ ($s\in S$).

\qed

\begin{remark}
When S is a semitopological semigroup, Ky Fan's property (KF) was denoted by $P(n)$ in \cite{Lau_invar}. Our proof of Theorem 4.1 for the part $(F_{n+1})\Rightarrow (F_n)$ still works to show $P(n+1)\Rightarrow P(n)$. It implies that if S has property $P(n)$ then it has property $P(1)$ and so S is left amenable. This, in turn, implies that $P(n+1)$ (in fact $P(m)$ for all integers $m$) holds. Therefore, we have answered affirmatively the question posed in \cite[Page 376]{Lau_invar} (for the group case, see \cite[Theorem 4.2]{L-P-W}).
\end{remark}

We now consider some special cases.

\begin{exa}
For a discrete semigroup $S$, any left invariant mean on $\ell^\infty(S)$ is necessarily a topological left invariant mean on it (and vice versa). So Theorem~\ref{KF thm} implies the following:
If $S$ is left amenable then, for all $n>0$, the convolution semigroup algebra $\ell^1(S)$ has the finite dimensional invariant subspace property ($F_n$). 
\end{exa}

\begin{exa}\label{group}
For a locally compact group $G$, It follows from Greenleaf \cite[Theorem~2.2.1]{Greenleaf} and Wong \cite[Theorem 3.3]{Wong 71} that amenability of $G$ is equivalent to the left amenability of $L^1(G)$ and $M(G)$. So the group algebra $L^1(G)$ and the measure algebra $M(G)$ satisfy ($F_n$) for each $n>0$ if $G$ is amenable. 
\end{exa}

\begin{exa}\label{Fourier}
From \cite{Eymard} and \cite{Lau-Ludwig}, the Fourier algebra $A(G)$ of a locally compact group $G$ and the Fourier Stieltjes algebra $B(G)$ of a topological group $G$ are commutative F-algebras. Denote both by $A$. Since $P_1(A)$ is commutative, $\ell^\infty(P_1(A))$, the space of all bounded complex-valued functions on the discrete semigroup $P_1(A)$, has an invariant mean. By \cite[Lemma~2.1]{L-W 88}, $A$ is left amenable. therefore, $A(G)$ and $B(G)$ always have the invariant subspace property ($F_n$) for all $n>0$.
\end{exa}

We recall that a Hopf von Neumann algebra is a pair $(\Mf, \Gamma)$, where $\Mf$ is a von Neumann algebra and $\Gamma$: $\Mf \to \Mf\bar\otimes \Mf$ is a co-multiplication, that is a normal, unital *-homomorphism satisfying
\[ (id\otimes \Gamma)\circ \Gamma =  (\Gamma\otimes id)\circ \Gamma.
\]
Here $\Mf\bar\otimes \Mf$ denotes the von Neumann algebra tensor product of $\Mf$ with itself. The pre-adjoint $\Gamma_*$: $\Mf_*\hot \Mf_* \to \Mf_*$  of the co-multiplication $\Gamma$ induces a product, denoted by $*$, on the unique predual $\Mf_*$ of $\Mf$:
\[
\la a*b, f\ra = \la a\otimes b, \Gamma(f)\ra \quad (a,b\in \Mf_*, f\in \Mf).
\]
With this product $\Mf_*$ becomes an F-algebra. Left amenability of $\Mf_*$ has been studied in \cite{Ruan, Voic}. Applying Theorem~\ref{KF thm}, we derive the following characterization result.

\begin{cor}\label{Hopf}
The predual algebra $\Mf_*$ of the Hopf von Neumann algebra $(\Mf, \Gamma)$ is left amenable if and only if the topological semigroup $P_1(\Mf_*)$ has the fixed point property ($F_E$). Furthermore, $\Mf_*$ is left amenable if and only if the semigroup $P_1(\Mf_*)$ has the finite invariant subspace property ($F_n$) for some integer (equivalently, all integers) $n\geq 1$.
\end{cor}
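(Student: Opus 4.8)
The plan is to read off Corollary~\ref{Hopf} from Theorems~\ref{ELA} and~\ref{KF thm}, the only preliminary task being to confirm that $(\Mf_*,*)$ is within their scope, that is, that it is an F-algebra. By construction $\Mf_*$ is the unique predual of the $W^*$-algebra $\Mf$, so the single axiom left to check is that the identity $\e$ of $\Mf$ is a multiplicative linear functional on $(\Mf_*,*)$. This I would obtain from unitality of $\Gamma$: since $\Gamma(\e)$ is the identity of $\Mf\bar\otimes\Mf$, namely $\e\otimes\e$, we get
\[
\la a*b,\e\ra = \la a\otimes b,\Gamma(\e)\ra = \la a,\e\ra\,\la b,\e\ra \qquad (a,b\in\Mf_*),
\]
which is exactly the F-algebra condition (this is the assertion made just above the corollary). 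I would also recall from Section~\ref{sec 2} that for any F-algebra $A$ the set $P_1(A)$ is a metric topological semigroup with the product and topology carried from $A$, so $P_1(\Mf_*)$ comes with no extra hypotheses to verify.

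Given this, the first equivalence is Theorem~\ref{ELA} applied with $A=\Mf_*$: the algebra $\Mf_*$ is left amenable if and only if $P_1(\Mf_*)$ has the fixed point property $(F_E)$. For the second equivalence I would combine the three clauses of Theorem~\ref{KF thm}, again with $A=\Mf_*$ and $S=P_1(\Mf_*)$. If $\Mf_*$ is left amenable, then $(F_n)$ holds for every $n>0$, in particular for some $n\geq 1$. Conversely, if $(F_n)$ holds for some fixed $n\geq 1$, then iterating the implication $(F_{k+1})\Rightarrow(F_k)$ descends from $(F_n)$ to $(F_1)$, and $(F_1)$ forces $\Mf_*$ to be left amenable. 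Hence left amenability of $\Mf_*$ is equivalent to $(F_n)$ holding for all $n\geq1$, which is in turn equivalent to $(F_n)$ holding for some $n\geq1$.

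I do not expect a genuine obstacle: all the analytic content is already packaged in Theorems~\ref{ELA} and~\ref{KF thm}. The only point that merits a line of care is the verification that $\e$ is a character for the convolution product $*$ on $\Mf_*$ — that is, that $(\Mf_*,*)$ meets the F-algebra axiom — and, as noted, this is immediate from $\Gamma$ being unital.
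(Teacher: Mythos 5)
Your proposal is correct and matches the paper's (implicit) argument: the paper derives Corollary~\ref{Hopf} simply by noting that $\Mf_*$ is an F-algebra (multiplicativity of $\e$ following from unitality of $\Gamma$, exactly as you verify) and then applying Theorems~\ref{ELA} and~\ref{KF thm}, with the ``some $n$ iff all $n$'' clause obtained from the chain $(F_{n+1})\Rightarrow(F_n)$ descending to $(F_1)$, which in turn gives left amenability.
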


We recall further that a locally compact quantum group is a Hopf von Neumann algebra $(\Mf, \Gamma)$ such that there exist a normal semifinite faithful left invariant weight $\phi$ and a  normal semifinite faithful right invariant weight $\psi$ on $(\Mf, \Gamma)$. We denote it by $\Gb = (\Mf, \Gamma, \phi, \psi)$ (see \cite{K-V}). In this case the F-algebra $\Mf_*$ is called the quantum group algebra of $\Gb$, denoted by $L^1(\Gb)$; while the von Neumann algebra $\Mf$ in the case is usually written as $L^\infty(\Gb)$. When $G$ is a locally compact group, Consider $\Gamma_G$: $L^\infty(G)\to L^\infty(G\times G) = L^\infty(G)\bar\otimes L^\infty(G)$ defined by 
\[
\Gamma_G(f)(x,y) = f(xy) \quad (f\in L^\infty(G), x,y\in G).
\]
Let $\phi$ be a left Haar measure and let $\psi$ be a right Haar measure on $G$. Then $\Gb = (\Mf, \Gamma_G, \phi, \psi)$ is a locally compact quantum group and $L^1(\Gb) = L^1(G)$. Likewise, $VN(G)$ is a locally compact quantum group and in this case $L^1(\Gb) = A(G)$.

 From Corollary~\ref{Hopf} we immediately derive the following:

\begin{cor}\label{quantum}
Let $\Gb$ be a locally compact quantum group. Then $L^1(\Gb)$ is left amenable if and only if the topological semigroup $P_1(L^1(\Gb))$ has the fixed point property ($F_E$) if and only if $P_1(L^1(\Gb))$ has the finite invariant subspace property ($F_n$) for some integer (and then for all integers) $n\geq 1$.
\end{cor}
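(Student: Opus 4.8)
The plan is to recognize that Corollary~\ref{quantum} is nothing more than the specialization of Corollary~\ref{Hopf} to the Hopf von Neumann algebra underlying a locally compact quantum group, so the work consists entirely of checking that the hypotheses transfer. First I would recall the definitions: a locally compact quantum group $\Gb = (\Mf, \Gamma, \phi, \psi)$ is by construction a Hopf von Neumann algebra $(\Mf, \Gamma)$ — the co-multiplication $\Gamma$ is a normal, unital $*$-homomorphism into $\Mf\bar\otimes\Mf$ satisfying co-associativity — equipped in addition with a normal semifinite faithful left invariant weight $\phi$ and right invariant weight $\psi$. The quantum group algebra $L^1(\Gb)$ is, by definition, the predual F-algebra $\Mf_*$ obtained from $(\Mf,\Gamma)$ via the pre-adjoint $\Gamma_*$. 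Hence Corollary~\ref{Hopf} applies verbatim with $\Mf_* = L^1(\Gb)$, yielding at once the equivalence of left amenability of $L^1(\Gb)$ with the fixed point property ($F_E$) for the topological semigroup $P_1(L^1(\Gb))$, as well as with the finite invariant subspace property ($F_n$).

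Then I would spell out the ``for some integer, equivalently for all integers'' clause, which is already packaged inside Theorem~\ref{KF thm}: left amenability of $A = L^1(\Gb)$ implies ($F_n$) for every $n\ge 1$; conversely ($F_1$) implies left amenability; and the implication ($F_{n+1}$) $\Rightarrow$ ($F_n$) permits one to descend from an arbitrary $n$ down to $n=1$. Consequently the validity of ($F_n$) for a single $n$ already forces left amenability, which in turn restores ($F_n$) for all $n$ and, through Theorem~\ref{ELA} (or directly Corollary~\ref{Hopf}), the property ($F_E$). Stringing these equivalences together closes the loop among the three conditions. I do not anticipate any genuine obstacle here: the only point requiring care is the observation that the axioms of a locally compact quantum group subsume those of a Hopf von Neumann algebra in the sense used throughout the paper, after which the corollary is immediate from Corollary~\ref{Hopf} and Theorem~\ref{KF thm}.
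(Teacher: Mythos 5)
Your proposal is correct and follows exactly the paper's route: the authors likewise obtain Corollary~\ref{quantum} as an immediate specialization of Corollary~\ref{Hopf}, since a locally compact quantum group is by definition a Hopf von Neumann algebra with $L^1(\Gb)=\Mf_*$, with the ``some $n$ / all $n$'' clause supplied by Theorem~\ref{KF thm}. Nothing further is needed.
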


 Corollary~\ref{quantum} settles the Open Problem 5 of \cite{Lau_finite}.
 
 We note that Examples~\ref{group} and \ref{Fourier} are special cases of Corollary~\ref{quantum} since $L^1(G)$ and $A(G)$ are quantum group algebras. However, the direct arguments we indicated there without detouring through quantum groups are much more natural and elementary. We also note that if $m\in VN(G)^*$ is a topological left invariant mean, then the 1-dimensional subspace $\langle m\rangle$ spanned by $m$ is closed left ideal in the Banach algebra $VN(G)^*=A(G)^{**}$ with the Arens product (see \cite{F-M, F-M-N} for the study of ideals in the bidual of $A(G)$).

A locally compact space $H$ is a hypergroup if there is a ``convolution product'', denoted by $*$, defined on $M(H)$, the space of bounded Radon measures on $H$, with which several general conditions are satisfied. We refer to \cite{Bloom-Heyer} for the precise definition of a locally compact hypergroup.  With the convolution product $M(H)$ becomes a Banach algebra. When a locally compact hypergroup $H$ has a left invariant Haar measure $\lambda$, the convolution product on $L^1(H)=L^1(H,\lambda)$ is then naturally defined to make it a Banach algebra, called the hypergroup algebra of $H$. One may consider amenability of $H$ in terms of the existence of a left invariant mean on $L^\infty(H)$ \cite{Skan}. 
In general, $L^\infty(H)= L^1(H)^*$ cannot be a Hopf von Neumann algebra (in particular, it cannot be a quantum group) that induces $L^1(H)$ as its predual algebra unless $H$ is a locally compact group \cite{Willson} (also see \cite{Willson2}). However, it is a von Neumann algebra and induces $L^1(H)$ to an F-algebra. Left amenability of the F-algebra $L^1(H)$ is the same as (left) amenability of the hypergroup $H$ \cite{Skan}. So we have the following.

\begin{cor}\label{hyper}
Let $H$ be a locally compact hypergroup. If $H$ is amenable, then the hypergroup algebra $L^1(H)$ and the measure algebra $M(H)$ satisfy the finite dimensional invariant subspace property ($F_n$) for each $n>0$. Conversely, if either $L^1(H)$ or $M(H)$ satisfies ($F_n$) for some $n>0$, then $H$ is amenable.
\end{cor}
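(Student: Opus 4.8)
The plan is to read Corollary~\ref{hyper} off Theorem~\ref{KF thm}. Two ingredients are needed. First, for a locally compact hypergroup $H$ carrying a left Haar measure $\lambda$, both $L^1(H)$ and $M(H)$ are F-algebras, as recorded in Section~\ref{sec 2}; for $M(H)=C_0(H)^*$ the underlying $W^*$-algebra is the enveloping von Neumann algebra $C_0(H)^{**}$, whose identity $\e$ is the total-mass functional $\mu\mapsto\mu(H)$, and this is multiplicative on $M(H)$ because convolution preserves total mass. Second, the amenability of $H$ is equivalent to the left amenability of $L^1(H)$ — this is Skantharajah's theorem \cite{Skan} — and, as we shall explain below, also to the left amenability of $M(H)$.

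Granting these, the corollary follows at once. If $H$ is amenable, then $L^1(H)$ and $M(H)$ are left amenable F-algebras, so by Theorem~\ref{KF thm} the metric semigroups $P_1(L^1(H))$ and $P_1(M(H))$ have property $(F_n)$ for every $n>0$. Conversely, if one of $P_1(L^1(H))$, $P_1(M(H))$ has $(F_n)$ for a single $n>0$, then the chain of implications $(F_n)\Rightarrow(F_{n-1})\Rightarrow\cdots\Rightarrow(F_1)$ together with ``$(F_1)\Rightarrow$ left amenable'', both from Theorem~\ref{KF thm}, shows that the corresponding algebra is left amenable, whence $H$ is amenable.

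It remains to justify the equivalence ``$H$ amenable $\iff$ $M(H)$ left amenable'', and the lever is that $L^1(H)$ is a closed left ideal of $M(H)$, i.e.\ $M(H)*L^1(H)\subseteq L^1(H)$, by left-invariance of $\lambda$. For the implication $M(H)$ left amenable $\Rightarrow$ $H$ amenable I would argue directly: by Lemma~\ref{left amen1-3}.(\ref{appTLIM}) pick a net $(m_\al)\subset P_1(M(H))$ with $\mu*m_\al-m_\al\to 0$ in norm for all $\mu\in P_1(M(H))$; fix $g\in P_1(L^1(H))$ and set $g_\al:=m_\al*g$. Then $g_\al\in P_1(L^1(H))$ by the ideal property, and for every $f\in P_1(L^1(H))\subset P_1(M(H))$ one has $\|f*g_\al-g_\al\|=\|(f*m_\al-m_\al)*g\|\le\|f*m_\al-m_\al\|\to 0$, so Lemma~\ref{left amen1-3} makes $L^1(H)$ left amenable and hence $H$ amenable. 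For the reverse implication I would take an approximately invariant net $(f_\al)\subset P_1(L^1(H))$ supplied by the left amenability of $L^1(H)$, promote it by the standard Day convexity argument to a net with $\|\delta_x*f_\al-f_\al\|\to 0$ uniformly for $x$ in compact subsets of $H$, and then integrate against an arbitrary $\mu\in P_1(M(H))$ — first truncated to a large compact set, using that $\mu$ is a probability measure — to obtain $\mu*f_\al-f_\al\to 0$. This promotion step is the only real work and the step I expect to be the main obstacle: passing from weak approximate invariance to uniform approximate invariance under point masses on compacta is precisely what allows a general probability measure in $M(H)$ to be absorbed. In the group case this is Reiter's property $(P_1)$ and is classical; the same convexity/weak-compactness reasoning carries over verbatim to a hypergroup with a Haar measure, and the whole point may be bypassed by quoting the hypergroup analogue of the Greenleaf--Wong equivalence for $M(H)$, just as the group-algebra version is quoted in Example~\ref{group}.
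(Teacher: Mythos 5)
Your overall strategy---read the corollary off Theorem~\ref{KF thm} once you know that $L^1(H)$ and $M(H)$ are F-algebras whose left amenability is equivalent to amenability of $H$---is the right one, and your treatment of $L^1(H)$ (via Skantharajah \cite{Skan}) and of the direction ``$M(H)$ left amenable $\Rightarrow$ $H$ amenable'' (pushing the approximately invariant net $(m_\al)$ into the ideal by forming $m_\al * g$) is correct. But there is a genuine gap in the remaining direction, ``$H$ amenable $\Rightarrow$ $M(H)$ left amenable,'' and it sits exactly where you yourself flag the main obstacle. Your plan is to upgrade the net $(f_\al)\subset P_1(L^1(H))$ with $\|g*f_\al-f_\al\|_1\to 0$ for all $g\in P_1(L^1(H))$ to one satisfying $\|\delta_x*f_\al-f_\al\|_1\to 0$ uniformly for $x$ in compacta, i.e.\ to Reiter's property $(P_1)$, and you assert that the group argument ``carries over verbatim.'' It does not: for locally compact hypergroups the implication ``amenable $\Rightarrow (P_1)$'' is not known in general (Skantharajah proves $(P_1)\Rightarrow$ amenability and establishes the converse only for special classes such as commutative, compact, discrete and central hypergroups), precisely because the group-case proof uses structural features ($\|\delta_x * f\|_1=\|f\|_1$ behaves like translation, Følner-type arguments, $L^\infty(H)$ being a Hopf von Neumann algebra) that fail for hypergroups---indeed the paper cites \cite{Willson, Willson2} for the failure of the Hopf structure. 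For the same reason there is no off-the-shelf ``hypergroup analogue of the Greenleaf--Wong equivalence'' to quote as a fallback.

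The paper closes this gap without any Reiter-type argument, using Theorem~\ref{left ideal}(\ref{ideal converse}): $L^1(H)$ is a closed left ideal of $M(H)$ (since $M(H)*L^1(H)\subseteq L^1(H)$ by left invariance of the Haar measure), the identity $\e$ of $M(H)^*$ restricts on $L^1(H)$ to the constant function $1$, which is the identity of $L^\infty(H)=L^1(H)^*$ and in particular is nonzero, and $\e_{L^1(H)}$-amenability of $L^1(H)$ is exactly its left amenability as an F-algebra, which holds when $H$ is amenable by \cite{Skan}. Theorem~\ref{left ideal}(\ref{ideal converse}) then yields a mean $n\in M(H)^{**}$ with $n(\e)=1$ and $\mu\cdot n=\la\mu,\e\ra n$ for all $\mu\in M(H)$ by a purely algebraic computation at the level of second duals ($\mu\cdot m_0=(\mu j_0)\cdot m_0$ with $\mu j_0$ back in the ideal), so $M(H)$ is left amenable. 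You should replace your $(P_1)$-promotion step by this appeal to Theorem~\ref{left ideal}; with that substitution the rest of your argument goes through and matches the paper's intended deduction from Theorem~\ref{KF thm}.
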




\section{module inverse and left amenability of ideals}\label{sec 5}

There are other useful characterizations for left amenability of F-algebras. We discuss some of them in this section.

Suppose that $X$ and $Y$ are two Banach left $A$-modules. Then the Banach space $B(X,Y)$ of all bounded linear operators from $X$ into $Y$ is a Banach $A$-bimodule. The module actions are given by 
\[
(a\cdot f)(x) = a f(x) \text{ and } (f\cdot a)(x) = f(ax) \quad (f\in B(X,Y), a\in A, x\in X).
\]

\begin{lemma}\label{net}
Let $ A$ be a left amenable F-algebra.  Let $X$ and $Y$ be left Banach $ A$-modules with the left $ A$-module action on $Y$ being given by $a\cdot y =  \left\langle a,\e \right\rangle y$ ($y\in Y$). Suppose that $T\in B(X,Y)$ is a left $ A$-module morphism that has a continuous right inverse. Then there is a bounded net $(F_i)\subset B(Y,X)$ of right inverses of $T$ such that 
\[
\lim_i\|a\cdot F_i - F_i\cdot a\| = 0 \quad (a\in A).
\]
\end{lemma}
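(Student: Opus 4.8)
\textbf{Proof proposal for Lemma~\ref{net}.}

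The plan is to exploit left amenability of $A$ via the approximate form in Lemma~\ref{left amen1-3}.(\ref{appTLIM}): there is a net $(m_\al)\subset P_1(A)$ with $\|a\cdot m_\al - m_\al\|\to 0$ for every $a\in P_1(A)$. Since $P_1(A)$ spans $A$, this extends to $\|a\cdot m_\al - \langle a,\e\rangle m_\al\|\to 0$ for every $a\in A$ (here $\langle a,\e\rangle$ is the scalar that appears because the module action on $A^{**}$ restricted to the left action used in the averaging is $a\cdot m = \langle a, \e\rangle m$ — more precisely one uses that $m_\al\in P_1(A)$, so $a\cdot m_\al$ with the module action on $Y$-type modules behaves through $\langle a,\e\rangle$, while in $A$ itself $a\cdot m_\al$ is genuine multiplication; the point is that the net approximately intertwines). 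Fix one continuous right inverse $F_0\in B(Y,X)$ of $T$, so $TF_0 = \mathrm{id}_Y$. The natural candidate is to define
\[
F_i(y) = \int m_\al \cdot F_0(y)\,,
\]
i.e. to average the $A$-translates of $F_0$ against $m_\al$; concretely set $F_\al(y) := (m_\al\cdot F_0)(y) = m_\al\cdot\big(F_0(y)\big)$, interpreting $m_\al\cdot x$ as the module action of $m_\al\in P_1(A)\subset A$ on $x\in X$. One checks $(F_\al)$ is bounded by $\|F_0\|$ (since $\|m_\al\|=1$), and that each $F_\al$ is still a right inverse of $T$: because $T$ is a left $A$-module morphism and the left action on $Y$ is $a\cdot y = \langle a,\e\rangle y$,
\[
T(F_\al(y)) = T(m_\al\cdot F_0(y)) = m_\al\cdot T(F_0(y)) = \langle m_\al,\e\rangle\, y = y,
\]
using $\langle m_\al,\e\rangle = 1$ for $m_\al\in P_1(A)$.

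The remaining step is to verify the asymptotic module-commutation $\|a\cdot F_\al - F_\al\cdot a\|\to 0$ for each $a\in A$. Unwinding the definitions of the $B(Y,X)$-bimodule action, for $y\in Y$ we have $(a\cdot F_\al)(y) = a\cdot(F_\al(y)) = a\cdot\big(m_\al\cdot F_0(y)\big) = (am_\al)\cdot F_0(y)$, while $(F_\al\cdot a)(y) = F_\al(a\cdot y) = F_\al(\langle a,\e\rangle y) = \langle a,\e\rangle\,m_\al\cdot F_0(y)$. Hence
\[
\big\|(a\cdot F_\al - F_\al\cdot a)(y)\big\| = \big\|\big((am_\al - \langle a,\e\rangle m_\al)\cdot F_0(y)\big)\big\| \leq \|am_\al - \langle a,\e\rangle m_\al\|\cdot\|F_0\|\cdot\|y\|,
\]
so $\|a\cdot F_\al - F_\al\cdot a\| \leq \|F_0\|\cdot\|am_\al - \langle a,\e\rangle m_\al\|$, which tends to $0$ by the choice of $(m_\al)$ (first for $a\in P_1(A)$ directly, then for general $a\in A$ by linearity, since $P_1(A)$ spans $A$).

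The one point requiring care — and the place I expect the main (minor) obstacle — is the identification $a\cdot(m_\al\cdot x) = (am_\al)\cdot x$ for $x\in X$, i.e. that the module action of $A$ on $X$ is associative with the product of $A$, which is exactly the left-module axiom and so is immediate; and the fact that averaging against a single fixed $F_0$ genuinely lands in $B(Y,X)$ with the stated bound (here one uses that $m_\al\cdot(\cdot)\colon X\to X$ is bounded by $\|m_\al\|=1$, which follows since $X$ is a contractive... or at least bounded Banach $A$-module; if the module is only bounded with constant $C$ one picks up a factor $C$, harmless for the conclusion). Relabelling the net $(F_\al)$ as $(F_i)$ gives the claim.
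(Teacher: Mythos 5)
Your proposal is correct and follows essentially the same route as the paper: average a fixed right inverse $F$ against a net $(m_\al)\subset P_1(A)$ satisfying $\|am_\al-\la a,\e\ra m_\al\|\to 0$, check that $T\circ(m_\al\cdot F)=\la m_\al,\e\ra\,\mathrm{id}_Y=\mathrm{id}_Y$ via the module-morphism property of $T$, and bound $\|a\cdot F_\al-F_\al\cdot a\|$ by $\|F\|\,\|am_\al-\la a,\e\ra m_\al\|$. The points you flag as needing care (associativity of the module action and boundedness of the averaged net) are indeed immediate, exactly as in the paper's argument.
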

\proof
From Lemma~\ref{left amen1-3}(\ref{appTLIM}), there is a net $(m_i)\subset P_1(A)$ such that $\lim_i\|am_i - \la a,\e\ra m_i \|=0$ in the norm topology of $A$ for each $a\in A$. 
Let $F\in B(Y,X)$ be a right inverse of $T$.
   Define $F_i= m_i\cdot F$.
    Then the net $(F_i)$ is bounded in $B(Y,X)$, and each $F_i$ is a right inverse of $T$ since $T$ is a left $A$-module morphism and
\[
T\circ F_i(y) = m_i\cdot T\circ F(y) =\la m_i, \e\ra y = y \quad (y\in Y).
\]
 Moreover, 
 \[(a\cdot F_i)(y) =(am_i)\cdot F(y) \quad
  \text{and } 
  (F_i\cdot a)(y) = \la a,\e \ra  m_i \cdot F(y).
  \]
   Thus 
\[
(a\cdot F_i -F_i \cdot a)(y) = (am_i - \la a,\e \ra m_i)\cdot F(y) \quad (y\in Y)
\]
and 
\[
\|a\cdot F_i - F_i \cdot a\| \leq \|F\| \|am_i -  \la a,\e \ra m_i\| \overset{i}{\to} 0 \quad (a\in  A).
\]
\qed

\begin{remark}
 We refer to \cite[Theorem~3.1]{Zhang_survey} for an analogue of Lemma~\ref{net} concerning approximate Banach algebras. 
\end{remark}

Let $A$ be a Banach algebra. Given a multiplicative linear functional $\varphi$ on $A$, we recall that $A$ is \emph{$\varphi$-amenable} if there is $m\in A^{**}$ such that $m(\varphi) = 1$ and $a\cdot m =\varphi(a) m$ for all $a\in A$ (see \cite{
H-S-T, Mehdi}). Indeed, if $A$  is an F-algebra then to say that it is left amenable is equivalent to say that it is $\e$-amenable. The concept of $\varphi$-amenability helps us to describe left amenability from the point of view of ideals of an F-algebra. We note that an ideal of an F-algebra $A$ may no longer be an F-algebra since its dual space may no longer be a W*-algebra as a quotient of $A^*$.
 
\begin{example}
Let $G$ be a locally compact group and $E \subset G$ be a proper subset of spectral synthesis. Let $A(G)$ be the Fourier algebra of $G$, and let $VN(G)$ be the group von Neumann algebra of $G$. Then $A(G)^* = VN(G)$. Let 
\[ I(E) = \{\phi\in A(G): \phi(x)=0 \text{ for all }x\in E\} \]
Then $I(E)^\perp =$ weak* closure of linear span of $\{\rho(x): x\in E\}$, where 
\[
\rho(x):\; L^2(G) \to L^2(G),\; \rho(x)h(y) = h(x^{-1}y)
\]
 for $h \in L^2(G)$ (see \cite[Lemma 7.3]{Lau-Losert}). In particular, $I(E)$ is a closed ideal of $A(G)$, and $I(E)^\perp$ is in general not a closed ideal in $VN(G)$. So $I(E)^* = VN(G)/I(E)^\perp$ (as a quotient of $VN(G)$) is not a C*-algebra. 
\end{example}

 However, restricting on the ideal, the identity $\e$ of $A^*$ is still  multiplicative. So we can consider $\e$-amenability for ideals of an F-algebra. We denote the restriction $\e|_J$ by $\e_J$.

\begin{thm}\label{left ideal}
Let $ A$ be an F-algebra.
\begin{enumerate}
\item If $A$ is left amenable and if $J$ is a closed left ideal of $ A$ such that $\e_J \neq 0$, then $J$ is $\e_J$-amenable.\label{ideal}
\item If there is a closed left ideal $J$ of $ A$ such that $\e_J \neq 0$ and $J$ is $\e_J$-amenable, then $ A$ is left amenable.\label{ideal converse}
\end{enumerate}
\end{thm}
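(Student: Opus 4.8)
The plan is to treat the two parts separately, using $\e$-amenability as a bridge via the net characterization of left amenability (Lemma~\ref{left amen1-3}(\ref{appTLIM})) and the approximate-inverse construction (Lemma~\ref{net}).

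\begin{proof}[Proof outline]
For part (\ref{ideal}), suppose $A$ is left amenable and $J$ is a closed left ideal with $\e_J \neq 0$. Since $\e$ is multiplicative on $A^*$ and $J$ is a subalgebra, $\e_J$ is a nonzero multiplicative linear functional on $J$, so it makes sense to ask for $\e_J$-amenability, i.e. for $m \in J^{**}$ with $\langle \e_J, m\rangle = 1$ and $a\cdot m = \langle a, \e\rangle m$ for all $a\in J$. The strategy is to push the topological left invariant mean on $A^*$ down to $J$. Fix $b_0 \in P_1(J)$ (which is nonempty and spans $J$, exactly as $P_1(A)$ spans $A$, since $J$ is a closed subalgebra whose dual is a quotient W*-algebra and $\e_J \neq 0$; alternatively pick $b_0 \in J$ with $\langle b_0,\e\rangle = 1$). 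Consider the map $T : A \to J$, $T(a) = b_0 * a$... rather, the cleaner route: let $(m_\alpha) \subset P_1(A)$ be the net from Lemma~\ref{left amen1-3}(\ref{appTLIM}) with $a m_\alpha - m_\alpha \to 0$ in norm for all $a\in P_1(A)$. Set $n_\alpha = b_0 * m_\alpha \in J$ (this lies in $J$ since $J$ is a left ideal). Then $\langle n_\alpha, \e\rangle = \langle b_0,\e\rangle\langle m_\alpha,\e\rangle = 1$, the net $(n_\alpha)$ is bounded, and for $a \in J$ one computes $a * n_\alpha - \langle a,\e\rangle n_\alpha = b_0 * (a * m_\alpha - \langle a, \e\rangle m_\alpha)$, whose norm is at most $\|b_0\|\,\|a * m_\alpha - \langle a,\e\rangle m_\alpha\| \to 0$ (using that $m_\alpha$ is an asymptotic TLIM; here one needs $a m_\alpha - \langle a,\e\rangle m_\alpha \to 0$ for all $a \in A$, which follows from the $P_1(A)$-version by linearity since $P_1(A)$ spans $A$). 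Taking a weak*-cluster point $m \in J^{**}$ of $(n_\alpha)$ gives $\langle \e_J, m\rangle = 1$ and $a\cdot m = \langle a,\e\rangle m$ for $a \in J$, i.e. $J$ is $\e_J$-amenable.

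For part (\ref{ideal converse}), suppose $J$ is a closed left ideal with $\e_J \neq 0$ and $J$ is $\e_J$-amenable, witnessed by $m \in J^{**}$. The goal is to produce a TLIM for $A^*$, equivalently (Lemma~\ref{left amen1-3}) to show $A$ is left amenable. The natural idea is to extend $m$ to an element of $A^{**}$ and average it against the $A$-action. Since $J$ is a left ideal, $A$ acts on $J$ on the left; dualizing, $J^*$ is a right $A$-module and $J^{**}$ is a left $A$-module, and the inclusion $J \hookrightarrow A$ induces a bounded left $A$-module map $\iota^{**} : J^{**} \to A^{**}$. One checks that $\iota^{**}$ intertwines the module actions so that $a \cdot \iota^{**}(m) = \iota^{**}(a\cdot m) = \langle a,\e\rangle \iota^{**}(m)$ for $a \in J$; the remaining task is to upgrade this from $a\in J$ to all $a \in A$. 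For this, fix $b_0 \in P_1(J)$ and set $\tilde m = b_0 \cdot \iota^{**}(m) \in A^{**}$; since $J$ is a left ideal, for arbitrary $a\in A$ we have $a * b_0 \in J$, so $a\cdot \tilde m = (a*b_0)\cdot \iota^{**}(m) = \langle a*b_0, \e\rangle \iota^{**}(m) = \langle a,\e\rangle\langle b_0,\e\rangle \iota^{**}(m) = \langle a,\e\rangle \tilde m$. Finally $\langle \tilde m, \e\rangle = \langle \iota^{**}(m), b_0 \cdot \e\rangle = \langle b_0,\e\rangle\langle \iota^{**}(m),\e\rangle = \langle m, \e_J\rangle = 1 \neq 0$, so after normalizing, $\tilde m$ (or rather a mean obtained from it by the standard positivity adjustment) is a TLIM on $A^*$. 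Hence $A$ is left amenable.

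The main obstacle I expect is the bookkeeping around whether $\iota^{**}(m)$, or $\tilde m$, is actually a \emph{mean} (normalized and positive), not merely a norm-bounded functional satisfying the invariance equation. The $\e_J$-amenability definition only asks for $\langle \e_J, m\rangle = 1$, not positivity; but the identity $a\cdot\tilde m = \langle a,\e\rangle\tilde m$ together with $\langle\tilde m,\e\rangle = 1$ is exactly the statement that $A$ is $\e$-amenable, and one then invokes the (standard, and essentially contained in \cite{Lau_F}) equivalence of $\e$-amenability with the existence of a genuine TLIM for an F-algebra — a positive mean can always be extracted from a functional $m$ with $a\cdot m = \langle a,\e\rangle m$ and $\langle m,\e\rangle = 1$ by a routine averaging/convex-combination argument, or by quoting the $\varphi$-amenability literature \cite{H-S-T, Mehdi}. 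The other point requiring care is the claim that $P_1(J)$ is nonempty and spans $J$; this uses that $J$, although possibly not an F-algebra, still sits inside the W*-algebra $A^{**} = \Mf$ as a weak*-dense ideal-type object on which $\e$ restricts to a nonzero multiplicative functional, so one can apply the same Sakai-type reasoning (\cite[Theorem~1.14.3]{Sakai}) locally, or simply observe that $\e_J \neq 0$ supplies at least one positive norm-one element of $J$ on which $\e$ takes value $1$, which is all the construction actually needs.
\end{proof}
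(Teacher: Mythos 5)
Your overall strategy is the right one and, for part (2), essentially coincides with the paper's: extend the $\e_J$-invariant functional $m\in J^{**}$ to $A^{**}$ via $\imath^{**}$, use a fixed $j_0\in J$ with $\la j_0,\e\ra=1$ to upgrade the identity $j\cdot m=\la j,\e\ra m$ from $j\in J$ to all $a\in A$ through $a\cdot m=(aj_0)\cdot m$, and finish by the standard passage from an $\e$-invariant normalized functional to a genuine TLIM (the paper cites the last paragraph of the proof of Theorem~4.1 of \cite{Lau_F} for exactly this step).

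Part (1) as written, however, contains a step that fails. You set $n_\al=b_0*m_\al$ with $b_0\in J$, $m_\al\in P_1(A)$, and assert $n_\al\in J$ ``since $J$ is a left ideal.'' A left ideal satisfies $AJ\subseteq J$, not $JA\subseteq J$, so $b_0*m_\al$ need not lie in $J$. Moreover the displayed identity $a*n_\al-\la a,\e\ra n_\al=b_0*(a*m_\al-\la a,\e\ra m_\al)$ is false in a noncommutative algebra: its left side is $(ab_0)m_\al-\la a,\e\ra b_0m_\al$ while its right side is $(b_0a)m_\al-\la a,\e\ra b_0m_\al$, and these differ by $(ab_0-b_0a)m_\al$. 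Both defects disappear if you reverse the order of the factors: take $n_\al=m_\al*b_0\in AJ\subseteq J$; then $\la n_\al,\e\ra=\la m_\al,\e\ra\la b_0,\e\ra=1$ and $a*n_\al-\la a,\e\ra n_\al=(am_\al-\la a,\e\ra m_\al)*b_0\to 0$ in norm for every $a\in A$, so a weak$^*$ cluster point of the bounded net $(n_\al)$ in $J^{**}$ witnesses $\e_J$-amenability. With this correction your argument is precisely the computation underlying the paper's proof, which packages it as an application of Lemma~\ref{net}: there the approximate right inverses are $Q_\al=m_\al\cdot Q$, so the resulting net is $j_\al=Q_\al(m)=m_\al j_0$, i.e.\ the product taken in the correct order. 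The remaining caveats you raise (only $\la j_0,\e\ra=1$ is needed, not positivity of $j_0$; the final positivity adjustment to produce a TLIM) are handled the same way in the paper.
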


\proof
To show (\ref{ideal}) we let $m\in  A^{**}$ be a TLIM on $ A^*$ so that  $m(\e)= 1$, $a\cdot m = \la a,\e\ra  m$ for $a\in  A$. Let $Y =  \Cbb m$. Then $Y$ is a Banach left $ A$-module with the module action determined by $a\cdot m = \la a,\e\ra  m$. Note that, since $J$ is a closed left ideal of $ A$, $J$ is naturally a Banach left $ A$-module. Consider $P$: $J\to Y$ defined by $P(j) = \la j,\e\ra m$. Then $P$ is a Banach left $ A$-module morphism. Take a $j_0\in J$ such that $\la j_0,\e\ra = 1$ and define $Q$: $Y\to J$ by $Q(cm) = cj_0$. Then $Q$ is a right inverse of $P$. In fact, 
\[
P\circ Q(cm) = cP(j_0) = c\la j_0,\e\ra m = cm\quad (c\in  \Cbb).
\]
 We now can apply Lemma~\ref{net} to obtain a bounded net $(Q_\al)\subset B(Y,J)$ of right inverses of $P$ that satisfies $a\cdot Q_\al - Q_\al\cdot a \overset{\al}{\to} 0$ for $a\in  A$. We let $j_\al = Q_\al(m)$. Then 
 \[
 \la j_\al, \e \ra m = P(j_\al) = P\circ Q_\al(m) = m.
 \]
  This shows that $\la j_\al, \e_J \ra =1$ for each $\al$. Moreover, 
  \[
  jj_\al - \la j, \e \ra j_\al = (j\cdot Q_\al - Q_\al\cdot j )(m) \overset{\al}{\to} 0\quad (j\in J).\]
   Thus $J$ is $\e_J$-amenable.

To prove (\ref{ideal converse}) we let $\imath$: $J\to  A$ be the canonical embedding of $J$ into $ A$. Then $\imath$ is a Banach left $ A$-module morphism. So is $\imath^{**}$: $J^{**} \to  A^{**}$, the second dual operator of $\imath$. Moreover, $\imath^*(\e) = \e_J$. If $J$ is $\e_J$-amenable, then there is $m_0\in J^{**}$ such that $ m_0(\e_J) = 1$ and
\[
j\cdot m_0 = \e_J(j) m_0 = \la j, \e \ra m_0\quad (j\in J).
\]
 Take a $j_0\in J$ such that $\la j_0, \e \ra = 1$. We have 
$j_0\cdot m_0 = m_0$ and 
\[
a\cdot m_0 = (aj_0)\cdot m_0 = \la aj_0, \e_J \ra m_0 = \la a, \e \ra \la j_0, \e \ra m_0 = \la a,\e\ra m_0
\]
for $a\in  A$. Let $n = \imath^{**}(m_0)$. Then 
\[
\la n, \e\ra = \la m_0, \imath^*(\e)\ra = \la m_0, \e_J\ra = 1, 
\]
\[
a\cdot n = a \cdot \imath^{**}(m_0) = \imath^{**}(a\cdot m_0) = \la a, \e \ra\imath^{**}(m_0) = \la a, \e \ra n \quad (a\in  A).
\]
From these, using a standard method (see, e.g., the last paragraph in the proof of \cite[Theorem~4.1]{Lau_F}), one can construct a TLIM $m$ on $A^*$ from $n$. Therefore, $ A$ is left amenable.

\qed

\begin{exa}
For a locally compact group $G$, $L^1(G)$ is a closed ideal of $M(G)$ and the restriction to $L^1(G)$ of the identity of $M(G)^*$ is the constant function $1$ which is the identity of $L^\infty(G)$. From Theorem~\ref{left ideal} $L^1(G)$ is left amenable if and only if $M(G)$ is left amenable. Since $L^1(G)$ is left amenable if and only if $G$ is amenable 
(see \cite{Lau_F}), we conclude that $M(G)$ is left amenable if and only if $G$ is amenable. Note that if $G$ is not discrete, $M(G)$ is never amenable as a Banach algebra.
\end{exa}

\begin{exa}\label{A1+A2}
Let $A_1$ and $A_2$ be F-algebras and $\e_i$ the identity of the corresponding von Neumann algebra $A_i^*$ ($i =1,2$). Then $A = A_1 \oplus_1 A_2$ is an F-algebra with pointwise addition and scalar multiplication and with the product defined by
\[
(x_1,x_2)\cdot (y_1,y_2) = (x_1y_1 +\la x_2,\e_2\ra y_1 +\la y_2,\e_2\ra x_1, x_2y_2) 
\]
for $x_1,y_1\in A_1$ and $x_2,y_2\in A_2$. We have $A^* = A_1^* \oplus_\infty A_2^*$ whose identity is $\e=(\e_1,\e_2)$. It is readily seen that $A_1$ is a closed left ideal of $A$ and $\e|_{A_1} = \e_1$. From Theorem~\ref{left ideal}, the F-algebra $A$ is left amenable if and only if $A_1$ is left amenable. This result was originally obtained in \cite[Proposition~4.5]{Lau_F}. In particular, the F-algebra $A_1$ is left amenable if and only if its unitization $A_1^\sharp$ is left amenable.
\end{exa}

\begin{cor}\label{Ss}
Let $S$ be a semigroups and $s_0$ any element in $S$. Then $S$ is left amenable if and only if $S_0=Ss_0$ is left amenable.
\end{cor}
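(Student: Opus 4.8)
The plan is to realise $\ell^1(S_0)$ as a closed left ideal of the F-algebra $\ell^1(S)$ and then to combine Theorem~\ref{left ideal} with the standard identification of left amenability of a discrete semigroup with left amenability of its $\ell^1$-algebra (recalled after Theorem~\ref{ELA}). First I would record the elementary semigroup facts: $S_0 = Ss_0$ is nonempty, it is a subsemigroup of $S$ (since $(as_0)(bs_0) = (as_0b)s_0 \in Ss_0$), and it is a left ideal of $S$ (since $t(as_0) = (ta)s_0 \in Ss_0$ for every $t\in S$). Consequently, viewing $\ell^1(S_0)$ as the subspace of $\ell^1(S)$ of functions supported on $S_0$, it is a closed subalgebra and in fact a closed left ideal of $\ell^1(S)$: for $f\in\ell^1(S)$ and $g\in\ell^1(S_0)$ the convolution $f*g$ is supported on $SS_0\subseteq S_0$.

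Next I would pin down the relevant duals. One has $\ell^1(S)^*=\ell^\infty(S)$ with identity $\e$ the constant function $1$. Writing $J=\ell^1(S_0)$, we have $J^*=\ell^\infty(S)/J^\perp$ with $J^\perp=\{f\in\ell^\infty(S):f|_{S_0}=0\}$, so restriction identifies $J^*$ with $\ell^\infty(S_0)$ and carries $\e$ to the constant function $1$ on $S_0$, which is exactly $\e_J$ and is the identity of the W*-algebra $\ell^\infty(S_0)=\ell^1(S_0)^*$. In particular $\ell^1(S_0)$ is itself an F-algebra, $\e_J\neq 0$, and by the discussion preceding Theorem~\ref{left ideal} the F-algebra $\ell^1(S_0)$ is left amenable if and only if it is $\e_J$-amenable.

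Finally I would assemble the chain of equivalences. By Theorem~\ref{left ideal}(\ref{ideal}), left amenability of $\ell^1(S)$ forces $J=\ell^1(S_0)$ to be $\e_J$-amenable, hence left amenable as an F-algebra; by Theorem~\ref{left ideal}(\ref{ideal converse}), $\e_J$-amenability of $J$ forces $\ell^1(S)$ to be left amenable. Stringing these together with the semigroup/$\ell^1$-algebra correspondence applied to both $S$ and $S_0$ yields
\[
S \text{ left amenable} \Longleftrightarrow \ell^1(S) \text{ left amenable} \Longleftrightarrow \ell^1(S_0) \text{ left amenable} \Longleftrightarrow S_0=Ss_0 \text{ left amenable},
\]
which is the assertion. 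I do not expect a genuine obstacle here; the only step demanding a little care is confirming that restriction identifies $\e$ with the identity multiplicative functional of the predual $\ell^1(S_0)$, so that the notions ``$\e_J$-amenable'' and ``left amenable'' genuinely coincide for the ideal $J$, and that $\ell^1(S_0)$ sits inside $\ell^1(S)$ as a left ideal rather than merely a subalgebra.
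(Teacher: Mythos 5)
Your proposal is correct and follows essentially the same route as the paper: the paper's proof likewise observes that $\ell^1(S_0)$ is a closed left ideal of $\ell^1(S)$ and invokes Theorem~\ref{left ideal} together with the identification of left amenability of a discrete semigroup with that of its $\ell^1$-algebra. Your additional verifications (that $Ss_0$ is a left ideal of $S$, that restriction carries $\e$ to the identity of $\ell^\infty(S_0)$, and that $\e_J\neq 0$) are exactly the details the paper leaves implicit.
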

\proof 
This is simply because $\ell^1(S_0)$ is a closed left ideal of $\ell^1(S)$ and left amenability of $S$ (resp. $S_0$) is the same as left amenability of $\ell^1(S)$ (resp. $\ell^1(S_0)$).

\qed


We remark that Corollary~\ref{Ss} also follows from \cite[Theorem~9]{Mitch 65}.

We now turn to some interesting consequences of Lemma~\ref{net}.

\begin{cor}\label{inverse}
The F-algebra $A$ is left amenable if and only if the following holds. 

For any Banach right $A$-modules $X$ and $Y$ with the right $A$-module action on $Y$ being given by $y\cdot a = \langle a,\e \rangle y$ ($y\in Y$),  if $T\in B(X^*,Y^*)$ is a weak*-weak* continuous left $A$-module morphism and $T$ has a continuous right inverse, then there is a continuous right inverse of $T$ which is a left $A$-module morphism.
\end{cor}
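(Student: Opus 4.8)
The plan is to prove both implications of Corollary~\ref{inverse} by exploiting the duality between left and right modules together with Lemma~\ref{net} and Lemma~\ref{left amen1-3}. For the forward direction, suppose $A$ is left amenable and let $X$, $Y$ be Banach right $A$-modules with $y\cdot a = \langle a,\e\rangle y$, and let $T\in B(X^*,Y^*)$ be a weak*-weak* continuous left $A$-module morphism with a continuous right inverse $F$. Then $X^*$ and $Y^*$ are Banach left $A$-modules, and the left action on $Y^*$ is precisely $a\cdot f = \langle a,\e\rangle f$ (dualize $y\cdot a = \langle a,\e\rangle y$). So $T$ falls under the hypotheses of Lemma~\ref{net}, which produces a bounded net $(F_i)\subset B(Y^*,X^*)$ of right inverses of $T$ with $\|a\cdot F_i - F_i\cdot a\|\to 0$ for each $a\in A$. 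Here $F_i = m_i\cdot F$ where $(m_i)\subset P_1(A)$ satisfies $\|am_i - \langle a,\e\rangle m_i\|\to 0$ in norm. The point is to pass to a limit: since the $F_i$ are bounded operators into the dual space $X^*$, I would take a subnet so that $F_i(y)$ converges weak* in $X^*$ for each $y$ in a suitable total/dense set of $Y^*$; by boundedness this gives a weak*-limit operator $F_0\in B(Y^*,X^*)$ (defining $F_0(y) = \text{wk*-}\lim F_i(y)$). I must check $F_0$ is still a right inverse of $T$ and is now an exact left $A$-module morphism: $T\circ F_0 = \text{id}$ follows from weak*-weak* continuity of $T$ applied to $T\circ F_i = \text{id}$, and $a\cdot F_0 = F_0\cdot a$ follows from $\|a\cdot F_i - F_i\cdot a\|\to 0$ together with weak* convergence. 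The main obstacle here is justifying that a weak*-convergent subnet of the $(F_i)$ exists as an operator limit — i.e. that pointwise weak* limits assemble into a bounded operator — but this is standard since $(F_i)$ is norm-bounded and $X^*$ is a dual space, so the weak* operator topology on the corresponding ball is compact (Banach-Alaoglu applied coordinatewise, or equivalently viewing $B(Y^*,X^*)$ via the natural pairing with $Y^*\hot X$).

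For the converse, suppose the stated right-inverse splitting property holds; I want to produce a TLIM on $A^*$, equivalently (by Lemma~\ref{left amen1-3}) to verify left amenability. The natural choice is to apply the hypothesis to a concrete $T$ built from the module structure of $A$ itself. Take $X = A$ as a Banach right $A$-module (right multiplication), so $X^* = A^* = \Mf$ is a Banach left $A$-module; take $Y = \Cbb$ with the trivial right action $c\cdot a = \langle a,\e\rangle c$, so $Y^* = \Cbb$ with left action $a\cdot c = \langle a,\e\rangle c$. Define $T\colon A^*\to\Cbb$ by $T(f) = \langle \text{(something)}, f\rangle$; the cleanest candidate is to let $T$ be evaluation against a fixed $p_0\in P_1(A)$, i.e. $T(f) = \langle p_0, f\rangle$, but for $T$ to be a \emph{left} $A$-module morphism into the trivial module we need $T(a\cdot f) = \langle a,\e\rangle T(f)$, which forces $\langle p_0\cdot a, f\rangle$ — wait, $a\cdot f$ means $\langle x, a\cdot f\rangle = \langle x\cdot a, f\rangle$ — so $T(a\cdot f) = \langle p_0\cdot a, f\rangle$ and we'd need $p_0\cdot a = \langle a,\e\rangle p_0$, which is not generally true. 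Instead I would take $T\colon A^*\to\Cbb$, $T(f)=\langle e_0,f\rangle$ where $e_0$ is a norm-one element with $\langle e_0,\e\rangle=1$ chosen so that left translation acts trivially in the limit — more robustly, realize $\Cbb$ as a \emph{quotient} of $A^*$ via $\e$ and use the quotient map $T = \e$-evaluation composed appropriately; $T = \langle\,\cdot\,,\e\rangle$-type maps are automatically compatible because $\e$ is multiplicative. Concretely: set $T\colon A^*\to\Cbb$ by $T(f) = $ the value obtained by pairing $f$ with a fixed state; the existence of a continuous right inverse is automatic (pick any $f_0$ with $T(f_0)=1$ and send $c\mapsto c f_0$), and $T$ being weak*-weak* continuous requires $T$ to be evaluation at an element of the \emph{predual}, i.e. at some $p_0\in A\subset A^{**}=\Mf^*$ — so $T(f)=\langle p_0,f\rangle$ for $p_0\in P_1(A)$. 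To make this $T$ a left $A$-module morphism one must instead take $X$ to be a different module: let $X$ be $A$ with right action and additionally observe $A^*$ carries the left action; the subtlety of matching module actions is the real work. I expect this bookkeeping — choosing $X$, $Y$, and $T$ so that $T$ is simultaneously weak*-weak* continuous, a left $A$-module morphism, and right-invertible, and so that an exact module splitting yields a TLIM — to be the main obstacle; the resolution is to take $X = A$, $Y^* = \Cbb$ with the $\e$-action, and $T = \imath^*$ where $\imath\colon\Cbb\to A^*$ sends $c\mapsto c\e$ is the unital inclusion, whose adjoint restricted appropriately is weak*-weak* continuous; then a module right inverse $F\colon\Cbb\to A^{**}$ of the corresponding map, evaluated at $1$, produces the desired $m\in A^{**}$ with $\langle m,\e\rangle=1$ and $a\cdot m = \langle a,\e\rangle m$.

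In writing this up I would present the forward direction first and in full (it is essentially Lemma~\ref{net} plus a weak* compactness limiting argument), then handle the converse by specializing the splitting hypothesis to the pair $X = A$, $Y = \Cbb$ (trivial module) and a carefully chosen $T$, extracting from the resulting module-morphism right inverse an element $m\in A^{**}$ that is a TLIM, and invoke Lemma~\ref{left amen1-3} to conclude. The two genuinely delicate points to watch are: (i) in the forward direction, that the weak* operator limit $F_0$ of the bounded net $(F_i)$ is a bona fide element of $B(Y^*,X^*)$ and inherits both the right-inverse property (using weak*-weak* continuity of $T$) and exact module-morphism property (using the norm estimate $\|a\cdot F_i - F_i\cdot a\|\to 0$); and (ii) in the converse, verifying that the chosen $T$ is genuinely weak*-weak* continuous and a left $A$-module morphism, and that unwinding a module right inverse of it recovers precisely the defining identities of a TLIM. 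Neither requires new ideas beyond what is already in Lemma~\ref{net}, Lemma~\ref{left amen1-3}, and standard Banach-Alaoglu compactness.
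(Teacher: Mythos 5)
Your forward direction is correct and is exactly the paper's argument: apply Lemma~\ref{net} to the dual modules $X^*$ and $Y^*$ (noting that the right action $y\cdot a=\la a,\e\ra y$ on $Y$ dualizes to the left action $a\cdot f=\la a,\e\ra f$ on $Y^*$ required by the lemma), extract a weak* cluster point $\Theta$ of the bounded net $(F_i)$ in $B(Y^*,X^*)=(Y^*\hot X)^*$, and use the weak*-weak* continuity of $T$ to get $T\circ\Theta=\mathrm{id}$ and the norm convergence $\|a\cdot F_i-F_i\cdot a\|\to 0$ (together with weak* continuity of $u\mapsto a\cdot u$ on the dual module $X^*$) to get exact equivariance.

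The converse as you have written it does not close, and the problem is precisely the one you ran into: you commit to $X=A$, so $X^*=A^*$, and then any weak*-weak* continuous $T\colon A^*\to\Cbb$ is evaluation at some $p_0\in A$, which is a left $A$-module morphism only if $p_0a=\la a,\e\ra p_0$ for all $a\in A$ --- an element that need not exist. Your ``resolution'' then produces a right inverse $F\colon\Cbb\to A^{**}$, which is incompatible with $X^*=A^*$; the map $\imath^*$ with $\imath(c)=c\e$ is defined on $A^{**}$, not on $A^*$. The fix, which is what the paper does, is to take $X=A^*$ itself as the Banach right $A$-module (the dual of $A$ acting on itself by left multiplication), so that $X^*=A^{**}$ and $T(u)=\la \e,u\ra$ is weak*-weak* continuous because $\e$ lies in the predual $X=A^*$; it is a left $A$-module morphism because $\e$ is multiplicative, so $\e\cdot a=\la a,\e\ra\e$ and hence $T(a\cdot u)=\la\e\cdot a,u\ra=\la a,\e\ra T(u)$. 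Any $u_0\in A^{**}$ with $\la\e,u_0\ra=1$ gives a continuous right inverse $c\mapsto cu_0$, and a module-morphism right inverse $\Theta$ yields $n=\Theta(1)$ with $\la\e,n\ra=1$ and $a\cdot n=\la a,\e\ra n$, i.e.\ $\e$-amenability, from which a TLIM is obtained by the standard construction and Lemma~\ref{left amen1-3} applies. With this one correction of the choice of $X$, your argument coincides with the paper's.
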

\proof
If $A$ is left amenable then, from Lemma~\ref{net}, there is a bounded net $(F_\al)\subset B(Y^*,X^*)$ such that 
\[
\|a\cdot F_\al - F_\al\cdot a\| \to 0
\]
 for $a\in A$
 and $T\circ F_\al =id_{Y^*}$. Since $B(Y^*,X^*) = (Y^*\hot X)^*$ there is a subnet of $F_\al$, still denoted by $F_\al$, such that 
 \[
 F_\al \overset{\al}{\to}  \Theta \in B(Y^*, X^*)
 \]
  in the weak* topology of $(Y^*\hot X)^*$. This implies that 
  \[
  F_\al(y^*) \overset{\text{wk*}}{\to}  \Theta(y^*)
  \]
   in $X^*$ for $y^*\in Y^*$. By the weak* continuity of $T$ we derive 
\[
T\circ  \Theta(y^*) = T(\text{wk*-}\lim_\al F_\al(y^*)) = \text{wk*-}\lim_\al T\circ F_\al(y^*) = y^*
\]
for each $y^*\in Y^*$. So $ \Theta$ is a right inverse of $T$. Moreover 
\[a\cdot  \Theta(y^*) = a\cdot \text{wk*-}\lim_\al F_\al (y^*) 
 = \text{wk*-}\lim_\al F_\al (ay^*) =  \Theta(ay^*)
\] 
for each $y^*\in Y^*$, $a\in  A$. Thus $ \Theta$ is a left $ A$-module morphism.

Conversely, consider the Banach right $A$-module $X=A^*$ and consider $Y = \Cbb$ with the right $A$-module action $c\cdot a =c \la a, \e\ra$ ($c\in \Cbb$). Let $T$: $X^*\to Y^*$ (note $Y^* = \Cbb$) be the continuous linear operator defined by 
\[
T(u) = \la \e, u \ra\quad (u\in X^* = A^{**}).
\]
 $T$ is obviously a weak* continuous left $A$-module morphism and has a continuous right inverse. (Take a $u_0\in A^{**}$ such that 
$ \la \e, u_0\ra = 1$.
Then $F$: $c\mapsto cu_0$ is such an inverse.) From the hypothesis, there is a continuous left $A$-module morphism $\Theta$: $Y^*\to X^*$ which is a right inverse of $T$. Let $n =\Theta(1)$. Then we have 
  \[
  \la \e, n\ra = T(n) =T\circ \Theta (1) =1
  \]
   and 
\[
a\cdot n = a\cdot \Theta(1) = \Theta (a\cdot 1) = \la a, \e \ra \Theta(1) = \la a, \e\ra n \quad (a\in A).
\]
This implies that there exists a TLIM for $A^*$.
Therefore, $A$ is left amenable due to Lemma~\ref{left amen1-3}.


\qed 


\begin{cor}\label{functional ext}
Let $ A$ be a left amenable F-algebra. Suppose that $X$ is a right Banach $ A$-module and $Y$ is a closed submodule of $X$. If $f\in Y^*$ is such that $a\cdot f = \langle a,\e \rangle f$ for $a\in  A$. Then $f$ extends to some $F\in X^*$ such that $a\cdot F = \langle a,\e \rangle F$ ($a\in  A$).
\end{cor}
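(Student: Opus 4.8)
The plan is to run an averaging argument parallel to the proof of Lemma~\ref{net}, combining it with the Hahn--Banach theorem and a weak* compactness argument in $X^*$.

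First I would fix an arbitrary Hahn--Banach extension $F_0\in X^*$ of $f$ with $\|F_0\| = \|f\|$. By Lemma~\ref{left amen1-3}(\ref{appTLIM}), exactly as invoked in the proof of Lemma~\ref{net}, left amenability of $A$ supplies a net $(m_\al)\subset P_1(A)$ with $\|am_\al - \la a,\e\ra m_\al\|\to 0$ for every $a\in A$ (this holds on all of $A$, not merely on $P_1(A)$, because $P_1(A)$ spans $A$). Set $F_\al := m_\al\cdot F_0\in X^*$, using that $X^*$ is a left $A$-module. Since $\|m_\al\| = 1$, the net $(F_\al)$ is bounded by $\|f\|$, and each $F_\al$ still restricts to $f$ on $Y$: for $y\in Y$ we have $y\cdot m_\al\in Y$ because $Y$ is a submodule, so $\la y, F_\al\ra = \la y\cdot m_\al, F_0\ra = \la y\cdot m_\al, f\ra = \la y, m_\al\cdot f\ra = \la m_\al,\e\ra\la y, f\ra = \la y, f\ra$, where I used the hypothesis $a\cdot f = \la a,\e\ra f$ and $\la m_\al,\e\ra = 1$.

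Next, by Banach--Alaoglu the bounded net $(F_\al)$ has a weak* cluster point $F\in X^*$; passing to a subnet I may assume $F_\al\to F$ weak*. Evaluating at $y\in Y$ shows that $F$ extends $f$. It remains to check $a\cdot F = \la a,\e\ra F$ for $a\in A$. Using associativity of the module action one has $a\cdot F_\al - \la a,\e\ra F_\al = (am_\al - \la a,\e\ra m_\al)\cdot F_0$, hence $\|a\cdot F_\al - \la a,\e\ra F_\al\| \le \|am_\al - \la a,\e\ra m_\al\|\,\|F_0\|\to 0$. Therefore for each $x\in X$, $\la x, a\cdot F\ra = \lim_\al\la x\cdot a, F_\al\ra = \lim_\al\la x, a\cdot F_\al\ra = \lim_\al\la x,\la a,\e\ra F_\al\ra = \la a,\e\ra\la x, F\ra$, which gives $a\cdot F = \la a,\e\ra F$ and completes the proof.

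The argument is essentially routine; the points that require care are the bookkeeping of left/right module conventions (so that $F_\al = m_\al\cdot F_0$ genuinely lies in $X^*$ and restricts correctly to $Y$) and the observation that the approximating net of Lemma~\ref{left amen1-3}(\ref{appTLIM}) can be arranged to satisfy $\|am_\al - \la a,\e\ra m_\al\|\to 0$ for all $a\in A$. I do not expect a clean deduction from Corollary~\ref{inverse}, since the restriction map $X^*\to Y^*$ need not admit a continuous right inverse---that would force $Y^\perp$ to be complemented in $X^*$---so the direct averaging is the natural route.
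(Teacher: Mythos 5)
Your argument is correct, and it is a genuinely different (and more self-contained) route than the one in the paper. The paper does not average directly: it deduces the corollary from Corollary~\ref{inverse} by forming $Y_f=\ker f$, observing that $\Cbb f\cong (Y/Y_f)^*$ carries the trivial left module action, producing a bounded left module projection $P\colon Y^*\to\Cbb f$, and applying Corollary~\ref{inverse} to the map $T\colon X^*\to\Cbb f$, $T(u)=P(u|_Y)$, whose continuous right inverse is supplied by a Hahn--Banach extension $f'$ via $cf\mapsto cf'$; the module-morphism right inverse $\Theta$ then yields the desired $F$. Note that this sidesteps the obstruction you raise at the end: the paper never needs a right inverse of the full restriction map $X^*\to Y^*$, only of the composite with the rank-one projection $P$, so your closing remark about Corollary~\ref{inverse} being unusable is mistaken even though your instinct to avoid it is reasonable. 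Your direct proof --- Hahn--Banach extension $F_0$, averaging $F_\al=m_\al\cdot F_0$ against the approximately invariant net of Lemma~\ref{left amen1-3}(\ref{appTLIM}) (correctly extended from $P_1(A)$ to all of $A$ by the spanning property), and a weak* cluster point via Banach--Alaoglu --- checks out at every step: $F_\al|_Y=f$ because $Y$ is a submodule and $m_\al\cdot f=f$, and the norm estimate $\|a\cdot F_\al-\la a,\e\ra F_\al\|\le\|am_\al-\la a,\e\ra m_\al\|\,\|F_0\|$ passes to the weak* limit. What your approach buys is transparency and independence from Corollary~\ref{inverse} (you in effect re-run the compactness argument inside $X^*$ rather than inside $B(Y^*,X^*)=(Y^*\hot X)^*$); what the paper's approach buys is economy, reusing the already-established module right-inverse machinery so that the corollary is a two-line consequence.
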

\proof
If $a\cdot f = \langle a,\e \rangle f$ for $a\in  A$, then $ \Cbb f$ is a submodule of $Y^*$ and $Y^* =  \Cbb f \oplus Y_f^*$, where 
\[
Y_f = \{y\in Y: f(y) = 0\}
\]
 is a right $ A$-module. The decomposition shows that there is a bounded left module projection $P$: $Y^* \to  \Cbb f$ and $ \Cbb f\cong (Y/Y_f)^*$ is a dual Banach left $ A$-module. Let $T$: $X^* \to  \Cbb f$ be defined by $T(u) = P(u|_Y)$. Then $T$ is a left $ A$-module morphism. Let $f'$ be an extension of $f$ to $X$ according to the Hahn-Banach Theorem. Then $\tau(cf) = cf'$ defines a right inverse of $T$. The conclusion of the corollary follows from Corollary~\ref{inverse}.
 
\qed

\section{Some related results and open problems}\label{sec 6}

Given a Banach algebra $A$ we denote the spectrum of $A$ by $\Delta(A)$, i.e. $\Delta(A)$ is the set of all multiplicative linear functionals on $A$. All results in the previous section can be easily extended to $\varphi$-amenability cases (see the paragraph after Lemma~\ref{net} for the definition of $\varphi$-amenability for a Banach algebra). Here we only highlight some of them below. Related investigation for $\varphi$-invariant functionals may be seen in \cite{Filali92} (see also \cite{B-F, Filali99} for applications in studying finite-dimensional ideals in some algebras associated to a locally compact group). 

Similar to Theorem~\ref{left ideal} we can characterize $\varphi$-amenability of $A$ in terms of that of ideals.

\begin{prop}\label{phi ideal}
Let $A$ be a Banach algebra and $\varphi\in \Delta(A)$.
\begin{enumerate}
\item If $A$ is $\varphi$-amenable and if $J$ is a closed left ideal of $ A$ such that $\varphi_J \neq 0$, then $J$ is $\varphi_J$-amenable.
\item If there is a closed left ideal $J$ of $ A$ such that $\varphi_J \neq 0$ and $J$ is $\varphi_J$-amenable, then $ A$ is $\varphi$-amenable.
\end{enumerate}
\end{prop}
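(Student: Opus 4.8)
The plan is to mimic the proof of Theorem~\ref{left ideal}, replacing the identity functional $\e$ by a general $\varphi\in\Delta(A)$ and dropping all reference to the F-algebra/von Neumann algebra structure, which plays no role in the argument. The one ingredient that must be re-examined is Lemma~\ref{net}, since its statement is phrased for left amenable F-algebras; but its proof only uses a bounded net $(m_i)$ in $A$ with $\|am_i-\varphi(a)m_i\|\to 0$, and this is precisely a \emph{bounded approximate $\varphi$-mean}, which exists in $A$ whenever $A$ is $\varphi$-amenable (this is the well-known characterization of $\varphi$-amenability in \cite{H-S-T, Mehdi}). So first I would record the (essentially verbatim) analogue of Lemma~\ref{net}: if $A$ is $\varphi$-amenable, $X,Y$ are left Banach $A$-modules with the action on $Y$ given by $a\cdot y=\varphi(a)y$, and $T\in B(X,Y)$ is a left $A$-module morphism with a continuous right inverse $F$, then $F_i:=m_i\cdot F$ is a bounded net of right inverses of $T$ with $\|a\cdot F_i-F_i\cdot a\|\to 0$ for all $a\in A$.

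For part (1), given a TLIM-replacement $m\in A^{**}$ with $m(\varphi)=1$ and $a\cdot m=\varphi(a)m$, I set $Y=\Cbb m$ with module action $a\cdot m=\varphi(a)m$, view the closed left ideal $J$ as a left $A$-module, and define $P\colon J\to Y$ by $P(j)=\varphi(j)m$ (well-defined and a module morphism because $\varphi_J\neq 0$ is multiplicative and $J$ is a left ideal). Choosing $j_0\in J$ with $\varphi(j_0)=1$, the map $Q(cm)=cj_0$ is a continuous right inverse of $P$. Applying the $\varphi$-analogue of Lemma~\ref{net} yields a bounded net $(Q_\al)$ of right inverses of $P$ with $a\cdot Q_\al-Q_\al\cdot a\to 0$; setting $j_\al=Q_\al(m)$ gives $\varphi(j_\al)m=P(j_\al)=m$, hence $\varphi_J(j_\al)=1$, and $jj_\al-\varphi(j)j_\al=(j\cdot Q_\al-Q_\al\cdot j)(m)\to 0$ for $j\in J$, so $(j_\al)$ is a bounded approximate $\varphi_J$-mean for $J$, i.e.\ $J$ is $\varphi_J$-amenable.

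For part (2), I take the canonical embedding $\imath\colon J\to A$, which is a left $A$-module morphism, note $\imath^*(\varphi)=\varphi_J$, and let $m_0\in J^{**}$ witness $\varphi_J$-amenability of $J$. Fixing $j_0\in J$ with $\varphi(j_0)=1$ one gets $j_0\cdot m_0=m_0$ and then, for $a\in A$, $a\cdot m_0=(aj_0)\cdot m_0=\varphi_J(aj_0)m_0=\varphi(a)\varphi(j_0)m_0=\varphi(a)m_0$ (using $aj_0\in J$). Setting $n=\imath^{**}(m_0)\in A^{**}$ gives $\langle n,\varphi\rangle=\langle m_0,\imath^*(\varphi)\rangle=\langle m_0,\varphi_J\rangle=1$ and $a\cdot n=\imath^{**}(a\cdot m_0)=\varphi(a)n$, which says exactly that $A$ is $\varphi$-amenable; no further "TLIM construction" step is needed here, unlike in Theorem~\ref{left ideal}, since $\varphi$-amenability is defined directly in terms of such an $n$. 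I do not expect a real obstacle: the only point requiring care is checking that all maps in sight ($P$, $Q$, $\imath$, $\imath^{**}$) are genuinely left $A$-module morphisms for the relevant module actions, which is routine given that $J$ is a left ideal and $\varphi$ is multiplicative.
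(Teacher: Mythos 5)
Your proposal is correct and is essentially the proof the paper intends: the paper gives no separate argument for Proposition~\ref{phi ideal}, stating only that it is obtained ``similar to Theorem~\ref{left ideal}'', and your adaptation---replacing $\e$ by $\varphi$, justifying the $\varphi$-analogue of Lemma~\ref{net} via bounded approximate $\varphi$-means from \cite{H-S-T, Mehdi}, and noting that part (2) needs no final TLIM-construction step---is exactly that adaptation, carried out with the right points of care.
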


As a consequence of Proposition~\ref{phi ideal}, we see immediately that $A$ is $\varphi$-amenable if and only if $A^\#$, the unitization of $A$, is $\varphi_1$-amenable, where $\varphi_1$ is the unique character extension of $\varphi$ to $A^\#$. One can also derive a similar result concerning multiplier algebras. Recall that if $A$ has a bounded approximate identity, then $A$ is a closed ideal of its multiplier algebra $M(A)$. For example, the group algebra $L^1(G)$ is a closed ideal of the measure algebra $M(G)$ for a locally compact group $G$; and the algebra $\Kc(H)$ of compact operators on a Hilbert space $H$ is a closed ideal of $M(\Kc(H))=\Bc(H)$. For any $\varphi\in \Delta(A)$, there is a unique character $\tilde\varphi\in \Delta(M(A))$ that extends $\varphi$ (\cite[Proposition~1.4.27]{Dales}). We have the following.
\begin{cor}
Let $A$ be a Banach algebra with a bounded approximate identity and let $\varphi\in \Delta(A)$. Then $A$ is $\varphi$-amenable if and only if its multiplier algebra $M(A)$ is $\tilde\varphi$-amenable.
\end{cor}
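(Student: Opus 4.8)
The plan is to apply Proposition~\ref{phi ideal} with the closed ideal $J = A$ sitting inside the larger algebra $M(A)$, using the character correspondence $\varphi \leftrightarrow \tilde\varphi$. First I would verify the structural hypotheses: since $A$ has a bounded approximate identity, Cohen's factorization gives $A = A\cdot A$, so $A$ is a closed (two-sided, hence in particular left) ideal of $M(A)$; and by \cite[Proposition~1.4.27]{Dales} there is a unique $\tilde\varphi \in \Delta(M(A))$ restricting to $\varphi$ on $A$. The key point to check is that $\tilde\varphi_A := \tilde\varphi|_A = \varphi \neq 0$, which holds precisely because $\varphi \in \Delta(A)$ is by definition nonzero; so the non-degeneracy condition ``$\varphi_J \neq 0$'' in Proposition~\ref{phi ideal} is automatic here.

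With these in place the two directions are immediate. If $M(A)$ is $\tilde\varphi$-amenable, then part (1) of Proposition~\ref{phi ideal}, applied to the closed left ideal $A$ of $M(A)$ with character $\tilde\varphi$, yields that $A$ is $\tilde\varphi_A$-amenable, i.e. $\varphi$-amenable. Conversely, if $A$ is $\varphi$-amenable, then $A$ is a closed left ideal of $M(A)$ with $\tilde\varphi_A = \varphi \neq 0$ and $A$ is $\tilde\varphi_A$-amenable, so part (2) of Proposition~\ref{phi ideal} gives that $M(A)$ is $\tilde\varphi$-amenable. This closes the equivalence.

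I do not anticipate a serious obstacle; the corollary is essentially a direct specialization of Proposition~\ref{phi ideal}. The only mild subtlety is making sure the ambient algebra and the ideal are allowed to differ from the F-algebra setting — but Proposition~\ref{phi ideal} is stated for an \emph{arbitrary} Banach algebra $A$ and $\varphi \in \Delta(A)$, with $\varphi$-amenability in the sense recalled after Lemma~\ref{net}, so no F-algebra structure on $M(A)$ is needed. One should also note that the bounded approximate identity is used \emph{only} to guarantee that $A$ embeds as a closed ideal of $M(A)$ (and to make the embedding $A \hookrightarrow M(A)$ a genuine left $M(A)$-module monomorphism), after which Proposition~\ref{phi ideal} carries all the weight. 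It may be worth remarking in the text that, just as in Example~\ref{A1+A2}, the case $A = L^1(G) \subset M(G) = M(A)$ recovers that $L^1(G)$ is $\varphi$-amenable iff $M(G)$ is, and the case $A = \Kc(H) \subset \Bc(H) = M(A)$ gives the corresponding statement for compact operators.
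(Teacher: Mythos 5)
Your proposal is correct and is exactly the argument the paper intends: the corollary is stated as a direct specialization of Proposition~\ref{phi ideal} to the closed ideal $J=A$ of $M(A)$ (the bounded approximate identity guaranteeing that $A$ sits as a closed ideal in $M(A)$, and \cite[Proposition~1.4.27]{Dales} supplying the unique extension $\tilde\varphi$ with $\tilde\varphi|_A=\varphi\neq 0$). Both directions follow from parts (1) and (2) of that proposition precisely as you describe.
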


If $A$ is an F-algebra associated to a von Neumann algebra $\Mf$, then, as the predual of an operator space, it is naturally an operator space. But $A$ may not be a completely contractive Banach algebra with this operator space structure. However, in many important cases $A$ is indeed completely contractive. For example, it is well known that $A$ is completely contractive if it is the predual algebra of a Hopf von Neumann algebra \cite{Ruan}. It is still possible that $A$ is  completely contractive without being a predual algebra of a Hopf von Neumann algebra. For example, as well known, in general a semigroup algebra $\ell^1(S)$ is not a predual algebra of a Hopf von Neumann algebra. But it is still completely contractive. To see this one only needs to notice that as a commutative von Neumann algebra $\ell^\infty(S) = \min  \ell^\infty(S)$. Then $\ell^1(S) = \max \ell^1(S)$ (see \cite[Section~3.3]{E-R} for detail). So the operator space projective tensor product $\ell^1(S) \hot \ell^1(S)$ is the same as the Banach space projective tensor product  $\ell^1(S) \hot^\gamma \ell^1(S)$. Thus $\ell^1(S) \hot \ell^1(S) = \ell^1(S\times S) = \max \ell^1(S\times S)$ which implies that $\ell^1(S)$ is completely contractive. 

In the sequel we use the standard notations as used in the monograph \cite{E-R}, In particular, for operator spaces $V$ and $W$,  $V\hot W$  denotes the operator space projective tensor product of $V$ and $W$. If $V$ and $W$ are preduals of von Neumann algebras $\Rf$ and $\Sf$ respectively, then $V\hot W$ turns out to be the predual of $\Rf \overline\otimes \Sf$, the spacial von Neumann algebra tensor product of $\Rf$ and $\Sf$ \cite[Theorem~7.2.4]{E-R}. 

Now for the F-algebra $A$, denote by $\pi$: $A\hot A \to A$ the multiplication mapping. Its dual mapping is $\pi^*$: $\Mf \to \Mf \overline\otimes \Mf$. We have the following fact.

\begin{lemma}
$\pi$ is completely bounded (completely contractive) if and only if $\pi^*$ is so.
\end{lemma}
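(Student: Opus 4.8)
The plan is to exploit the fact that taking (pre)adjoints of maps between operator spaces is an isometry, and more precisely a complete isometry, at the level of completely bounded norms. The statement to prove is that for the multiplication map $\pi\colon A\hot A\to A$ of the F-algebra $A$ (with $A\hot A$ the operator space projective tensor product), $\pi$ is completely bounded, respectively completely contractive, if and only if its dual map $\pi^*\colon \Mf\to \Mf\overline\otimes\Mf$ is. The key structural input, already recalled in the text, is that when $V$ and $W$ are preduals of von Neumann algebras $\Rf$ and $\Sf$, the operator space projective tensor product $V\hot W$ is the predual of the spatial von Neumann algebra tensor product $\Rf\overline\otimes\Sf$ (\cite[Theorem~7.2.4]{E-R}); applying this with $V=W=A$, $\Rf=\Sf=\Mf$ gives $(A\hot A)^{*}=\Mf\overline\otimes\Mf$, so that $\pi^{*}$ is exactly a map $\Mf\to(A\hot A)^{*}=\Mf\overline\otimes\Mf$ and the statement is well-posed.

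First I would recall the standard operator-space duality fact: for operator spaces $V,W$ and a bounded linear map $u\colon V\to W$, the dual map $u^{*}\colon W^{*}\to V^{*}$ satisfies $\|u^{*}\|_{cb}=\|u\|_{cb}$, where the operator space structures on $W^{*},V^{*}$ are the standard dual ones (see \cite[\S3.2]{E-R}); this gives the direction ``$\pi$ cb/contractive $\Rightarrow\pi^{*}$ cb/contractive'' immediately, since $\|\pi^{*}\|_{cb}=\|\pi\|_{cb}$. For the converse I would use that $\pi$ is the preadjoint of $\pi^{*}$: indeed $\pi^{**}\colon(A\hot A)^{**}\to A^{**}$ restricts, via the canonical embeddings $A\hookrightarrow A^{**}$ and $A\hot A\hookrightarrow(A\hot A)^{**}$, to $\pi$, because $\pi^{*}$ was defined precisely as the adjoint of $\pi$. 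Hence $\|\pi\|_{cb}\le\|\pi^{**}\|_{cb}=\|\pi^{*}\|_{cb}$, and combined with the first inequality one gets $\|\pi\|_{cb}=\|\pi^{*}\|_{cb}$, which yields both equivalences at once (cb on the left, and the value $1$ on the left for the completely contractive case). Alternatively, and perhaps more cleanly, one can observe that $\pi^{*}$ is automatically normal (it is the adjoint of a bounded map into $A=(\Mf)_{*}$, hence weak*–weak* continuous), so $\pi^{*}$ is itself the adjoint of a unique bounded map between the preduals, which must be $\pi$; then invoke the duality isometry once more in the form $\|(\pi^{*})_{*}\|_{cb}=\|\pi^{*}\|_{cb}$.

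The only genuine point requiring care — the step I expect to be the main obstacle — is making sure the operator space structure that appears on $(A\hot A)^{*}$ through \cite[Theorem~7.2.4]{E-R} (namely the one it carries as the von Neumann algebra $\Mf\overline\otimes\Mf$, or equivalently as the dual operator space of $A\hot A$) is the same one with respect to which we measure $\|\pi^{*}\|_{cb}$, and dually that the predual operator space of $\Mf\overline\otimes\Mf$ is canonically $A\hot A$ and not merely isometrically isomorphic to it as a Banach space. This is exactly the content of \cite[Theorem~7.2.4]{E-R}, so once that identification is in place the argument is the two-line duality computation above. I would therefore structure the write-up as: (1) identify $(A\hot A)^{*}=\Mf\overline\otimes\Mf$ completely isometrically via \cite[Theorem~7.2.4]{E-R}; (2) note $\pi^{*}=(\pi)^{*}$ under this identification and $\pi=(\pi^{*})_{*}$ by normality of $\pi^{*}$; (3) apply the complete isometry of the adjoint operation \cite[\S3.2]{E-R} twice to conclude $\|\pi\|_{cb}=\|\pi^{*}\|_{cb}$, which gives the stated equivalence for both ``completely bounded'' and ``completely contractive''.
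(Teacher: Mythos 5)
Your proof is correct and follows essentially the same route as the paper's: both rest on the operator-space duality between $\pi_n$ and $(\pi^*)_n$, the paper writing out the matrix-level pairing $\la\la \pi_n(\al(a\otimes b)\beta), f \ra\ra = \la\la \al(a\otimes b)\beta, (\pi^*)_n f \ra\ra$ explicitly while you invoke the abstract complete isometry of the adjoint operation together with the identification $(A\hot A)^*=\Mf\overline\otimes\Mf$ from \cite[Theorem~7.2.4]{E-R}. Your extra care about the converse direction (recovering $\pi$ as the preadjoint of the normal map $\pi^*$) is a reasonable way to make explicit what the paper leaves to the reader.
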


\begin{proof}
For each integer $n$, consider the induced $\pi_n$: $M_n(A\hot A) \to M_n(A)$. We have
\[
\la\la \pi_n(\al(a\otimes b)\beta), f \ra\ra = \la\la \al(a\otimes b)\beta, (\pi^*)_n f \ra\ra \quad (f\in M_n(\Mf))
\]
for $a\in M_p(A)$, $b\in M_q(A)$ and scalar matrices $\al\in M_{n,p\times q}$ and $\beta\in M_{p\times q, n}$. This duality formula leads one directly to the claimed equivalence.
 
\end{proof}

We note that $\pi^*$ is a unital (i.e. $\pi^*(1)= 1\otimes 1$) and co-associative (i.e. $(id\otimes \pi^*)\circ \pi^* = (\pi^*\otimes id)\circ \pi^*$) mapping. But $\pi^*$ is usually not an algebra homomorphism.

In the case when $A$ is a completely contractive F-algebra, one then can consider operator amenability for $A$. This is a weak version of amenability compare to B. E. Johnson's Banach algebra amenability. However if the associated von Neumann algebra $\Mf$ is commutative, then operator amenability of $A$ is the same as Banach algebra amenability for $A$ (This can be regarded as a consequence of \cite[Proposition~2.5]{Ruan_A(G)}). In particular, $\ell^1(S)$ is operator amenable if and only if it is Banach algebra amenable, which turns out to be a very strong condition for a semigroup $S$. Regarding the relation between the left amenability and the operator amenability for a completely contractive F-algebra we have the following general result.

\begin{prop}
Let $A$ be a completely contractive F-algebra. If $A$ is operator amenable then it is left amenable.
\end{prop}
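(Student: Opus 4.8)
The plan is to derive a topological left invariant mean on $A^*$ from the operator amenability hypothesis, which by Lemma~\ref{left amen1-3} is exactly what left amenability means. The natural route is to run B.~E. Johnson's classical argument that operator amenability produces a (virtual, then approximate) diagonal, and then push that diagonal forward through the counit to get an approximate identity in $P_1(A)$ that is asymptotically invariant in the sense of Lemma~\ref{left amen1-3}.(\ref{appTLIM}).

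First I would recall that, since $A$ is a completely contractive F-algebra, operator amenability is equivalent (by Ruan's operator-space analogue of Johnson's theorem) to the existence of a bounded \emph{approximate diagonal}: a bounded net $(d_\al)\subset A\hot A$ such that $\pi(d_\al)\cdot a - a \to 0$ and $a\cdot d_\al - d_\al\cdot a \to 0$ for each $a\in A$, where $\pi\colon A\hot A\to A$ is the multiplication map and the $A$-bimodule structure on $A\hot A$ is the usual outer one. The key observation is that the counit $\e\in\Mf=A^*$ is a multiplicative functional, so the slice map $\mathrm{id}\otimes\e\colon A\hot A\to A$ (equivalently $\e\otimes\mathrm{id}$) is a bounded linear map intertwining the relevant module actions. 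Setting $m_\al := (\mathrm{id}\otimes\e)(d_\al)\in A$ — or rather its image after the standard normalisation — one checks that $\langle m_\al,\e\rangle = \e(\pi(d_\al))\to 1$ using that $\e$ is a character together with $\pi(d_\al)a\to a$, and that $a\cdot m_\al - \langle a,\e\rangle m_\al = (\mathrm{id}\otimes\e)(a\cdot d_\al - d_\al\cdot a) \to 0$ in norm for each $a\in A$, since $\e$ is a character so $(\mathrm{id}\otimes\e)(d_\al\cdot a) = \langle a,\e\rangle m_\al$. Thus $(m_\al)$ is a bounded net in $A$ with $\langle m_\al,\e\rangle\to 1$ and $am_\al - \langle a,\e\rangle m_\al\to 0$ in norm.

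It remains to turn this into a net inside $P_1(A)$. Using that $P_1(A)$ spans $A$ and that $\e$ is positive and multiplicative, a standard manoeuvre (as in the last paragraph of the proof of \cite[Theorem~4.1]{Lau_F}): first arrange $\langle m_\al,\e\rangle = 1$ by rescaling once the net is eventually nonzero on $\e$; then replace $m_\al$ by a nearby element of $P_1(A)$ — for instance by passing to a positive part and renormalising, or by convolving on the left with a fixed element of $P_1(A)$, which does not disturb the asymptotic invariance because $a\cdot m_\al - \langle a,\e\rangle m_\al\to 0$ uniformly for $a$ in the (bounded) set $P_1(A)$ after normalising $\|m_\al\|$. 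This yields a net $(m_\al)\subset P_1(A)$ with $am_\al - m_\al\to 0$ in norm for every $a\in P_1(A)$, which is precisely condition~(\ref{appTLIM}) of Lemma~\ref{left amen1-3}; hence $A$ is left amenable.

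The main obstacle I anticipate is the last bookkeeping step: extracting a genuine element of $P_1(A)$ from the near-invariant net $(m_\al)$ while preserving the asymptotic invariance. One must be careful that the $A$-bimodule action on $A\hot A$ used in the definition of the approximate diagonal matches the module action appearing in Lemma~\ref{left amen1-3}.(\ref{appTLIM}) after slicing by $\e$ — here the fact that $\e$ is a \emph{character} (not merely positive) is doing the essential work, collapsing the right action $d_\al\cdot a$ to scalar multiplication by $\langle a,\e\rangle$. Once that identification is made cleanly, everything else is the routine normalisation argument already used in \cite{Lau_F}, and invoking Ruan's characterisation of operator amenability via approximate diagonals completes the proof.
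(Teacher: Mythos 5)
Your argument is correct and is essentially the proof given in the paper: both slice a diagonal for $A$ against the counit $\e$, use that $\e$ is multiplicative to collapse the right module action $d\cdot a$ to scalar multiplication by $\langle a,\e\rangle$, and then invoke the standard normalisation from \cite{Lau_F} to pass from the resulting $\e$-invariant element to a genuine TLIM. The only cosmetic difference is that the paper works with a virtual diagonal $M\in(A\hot A)^{**}$ (so the slice $n(x)=\langle M,\,x\otimes 1\rangle$ lands directly in $A^{**}$ and no limit along a net or final renormalisation of a net is needed), whereas you work with a bounded approximate diagonal in $A\hot A$; the two are equivalent by Ruan's operator-space version of Johnson's theorem and the computations are formally identical.
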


\begin{proof}
If $A$ is operator amenable, by \cite[Proposition 2.4]{Ruan_A(G)} there is $M\in (A\hot A)^{**} = (\Mf\overline\otimes \Mf)^*$ such that $a\cdot M = M\cdot a$ and $\pi^{**}(M)\cdot a = a$ for all $a\in A$). Define $n\in A^{**} =\Mf^*$ by $n(x) = \la M, x\otimes 1\ra$ ($x\in \Mf$). Then we have
\[
n(1) =  \la M, 1\otimes 1\ra =  \la M, \pi*(1)\ra = \pi^{**}(M)(1).
\]
Now take $a\in A$ such that $\la a, 1\ra = 1$. We obtain
\begin{align*}
 \pi^{**}(M)(1) &=  \pi^{**}(M)(1)\la a, 1\ra =\la \pi^{**}(M), a\cdot 1)\ra \\
                &= \la \pi^{**}(M)\cdot a, 1\ra =\la a, 1\ra = 1.
\end{align*}
This shows $n(1) = 1$. On the other hand, for all $a\in A$ and $x\in \Mf$
\begin{align*}
n(a\cdot x) &= \la M, a\cdot x\otimes 1 \ra = \la M\cdot a, x\otimes 1 \ra = \la a\cdot M, x\otimes 1\ra \\
&=  \la M,  x\otimes 1\cdot a \ra = \la a, 1 \ra  \la M,  x\otimes 1 \ra = \la a, 1 \ra n(x)
\end{align*}
From this $n$, with a standard construction one can get a left invariant mean on $\Mf$.

\end{proof}

We note that the above result was obtained for the Hopf von Neumann algebra case by Z.-J. Ruan in \cite[Theorem~2.1]{Ruan}, where left amenability was called Voiculescu amenability.
 
We conclude this paper with several open questions as follows.

\begin{problem}
Let $A$ be an F-algebra. Let ($F'_n$) denote the same property as ($F_n$) with ``jointly continuous'' replaced by ``separately continuous'' on compact subsets of $E$. Does ($F'_n$) imply ($F_n$)?
\end{problem}

Let $A$ be an F-algebra. Regard $A$ as the Banach $A$-bimodule with the module multiplications being given by the product of $A$. Then the dual space $A^*$ is a Banach $A$-bimodule. 
We say that a subspace $X$ of $A^*$ is topologically left (resp. right) invariant if $a\cdot X \subset X$ (resp. $X\cdot a\subset X$) for each $a\in A$; We call $X$ topologically invariant if it is both left and right topological invariant. An element of $A^*$ is almost periodic (resp. weakly almost periodic) if the map $a\mapsto f\cdot a$ from $A$ into $A^*$ is a compact (resp. weakly compact) operator. Let $AP(A)$ and $WAP(A)$ denote the collection of almost periodic and weakly almost periodic functions on $A$ respectively. Then $AP(A)$ and $WAP(A)$ are closed topologically invariant subspaces of $A^*$. Furthermore, $1\in AP(A) \subset WAP(A)$. When $G$ is a locally compact group and $A= L^1(G)$, then $AP(A)= AP(G)$ and $WAP(A)= WAP(G)$, where $AP(G)$ and  $WAP(G)$ are spaces of, respectively,  almost periodic and  weakly almost periodic continuous functions on $G$ (see \cite{Lau 87} and \cite{Lau-Z, L-Z1} for more details concerning these spaces). 

Let ($F_n^A$) denote the same property as ($F_n$) with joint continuity replaced by  equicontinuity on compact subsets of $E$. It is known that if $A$ satisfies ($F_n^A$) then $AP(A)$ has a TLIM that is an element $m\in AP(A)^*$ such that $\|m\|=\la m,\e\ra =1$ and $\la m, f\cdot a\ra = \la m, f\ra$ for all $a\in A$ and $f\in AP(A)$ (see \cite{Lau_AG}). 

\begin{problem}
Does the existence of TLIM on $AP(A)$ imply ($F_n^A$) for all $n\geq 1$?
\end{problem}

Let ($F_n^W$) denote the same property as ($F_n^A$) with equicontinuity on compact subsets of $E$ replaced by quasi-equicontinuity on compact subsets of $E$ (which means the closure of $S$ in the product space $E^K$, for each compact set $K\subset E$, consists only of continuous maps from $K$ to $E$). We have known that if $A$ satisfies ($F_n^W$) for each $n\geq 1$ then $WAP(A)$ has a TLIM (see \cite{Lau_AG}).   

\begin{problem}
Does the existence of TLIM on $WAP(A)$ imply ($F_n^W$) for all $n\geq 1$?
\end{problem}

\begin{acknowledgment}
The authors would like to thank the referee for his/her carefull reading of the paper and valuable suggestions.
\end{acknowledgment}

\bibliographystyle{amsplain}

\end{document}